\documentclass[10pt]{amsart}
\usepackage[pagebackref]{hyperref}
\usepackage{mathrsfs}
\usepackage{graphicx}
 \usepackage[all]{xy}
  \usepackage{amssymb}	
  \usepackage{color}
  \usepackage{longtable}
\usepackage{tikz}\usetikzlibrary{shapes.geometric,intersections,3d,quotes,angles}
\usepackage[pagebackref]{hyperref}
\usepackage{float}
\usepackage{mathtools}

\newtheorem{thm}{Theorem}[section]
	\newtheorem*{thm*}{Theorem}
	\newtheorem*{corr*}{Corollary}
	\newtheorem{lemma}[thm]{Lemma}
	\newtheorem*{lemm*}{Lemma}
	
	\newtheorem*{prop*}{Proposition}
	\newtheorem{corr}[thm]{Corollary}
	\newtheorem{crit}[thm]{Criterion}
	
	\newtheorem{obsv}[thm]{Observation}
	\newtheorem{summar}[thm]{Summary}
\theoremstyle{definition}
	\newtheorem{dfn}[thm]{Definition}
	\newtheorem*{dfn*}{Definition}
	\newtheorem{exmple}[thm]{Example}
	\newtheorem*{exmple*}{Example}
	
	\newtheorem{conj}[thm]{Conjecture}
	\newtheorem*{conj*}{Conjecture}

	\newtheorem*{cond*}{Condition}
	\newtheorem{proprs}[thm]{Properties}
\theoremstyle{remark}

	\newtheorem*{rmq}{\textit{Remark}}
       	\newtheorem{rmk}[thm]{\textit{Remark}}
 \usepackage[stix2]{newtxmath} 		% substitute  stix2-fonts for newtxmath gives nicer "v"
 	 \usepackage[cal=boondoxo]{mathalfa} 	% mathcal
	 \usepackage{physics} 
\newcommand{\bC}{{\vvmathbb{C}}}

\newcommand{\bN}{{\vvmathbb{N}}}
\newcommand{\bP}{{\vvmathbb{P}}}
\newcommand{\bQ}{{\vvmathbb{Q}}}
\newcommand{\bR}{{\vvmathbb{R}}}
\newcommand{\bZ}{{\vvmathbb{Z}}}
\newcommand{\mbold}[1]{\vb*{#1}} 
	
	\newcommand{\bE}{{\mbold{E}}}

	\newcommand{\bbP}{{\mbold{P}}}

	\def\ii{{\mbold i}}

	\def\b1{{\mbold 1}}

 \usepackage[cal=boondoxo]{mathalfa}

\newcommand\cP{{\mathcal P}}

  \DeclareFontEncoding{LS1}{}{}
    \DeclareFontSubstitution{LS1}{stix2}{m}{n}
              \DeclareSymbolFont{symbols2}{LS1}{stixfrak} {m} {n}
		\DeclareMathSymbol{\operp}{\mathbin}{symbols2}{"A8}
  		
 \def\mapright#1{\xrightarrow[]{#1}}

\def\into{\hookrightarrow}

	\DeclareRobustCommand\longtwoheadrightarrow
     {\relbar\joinrel\relbar\joinrel\twoheadrightarrow}
\def\lset{\{}  	% for { 
\def\rset{\}}  	% for } 
	\def\set#1{\lset#1\rset} 
	\def\sett#1#2{\lset #1 \mid  #2 \rset}  
	\renewcommand\setminus{-}			% new definition
\newcommand\slg[2]{\operatorname{\mathsf SL}_{#2}({#1})}

\def\alg{\operatorname{\mathsf Alg}}

\def\codim{\operatorname{codim}}
\def\comp{\raise1pt\hbox{{$\scriptscriptstyle\circ$}}}
\def\ext{\operatorname{Ext}}
\def\half{\frac{1}{2}}
\def\hdg{\text{\rm Hdg}}
\def\Hom{\text{\rm Hom}}
 \def\im{\text{\rm Im}}
 \def\id{\text{\rm id}} 							%identity
 
\def\pic#1#2{\text{Pic}^{#2}(#1)}
\def\prim{\text{\rm prim}}

\def\aj{\text{\rm AJ}}
\def\alb{\mathop{\rm Alb}\nolimits}

\def\alg{\text{\rm alg}}

\def\rat{\text{\rm rat}}
\def\inc{\text{\rm inc}}
\def\num{\text{\rm num}}
\def\cl#1{\mathsf{cl}(#1)} % class of cycles
\def\cyc#1#2#3{{\mathsf{Z}}^{#1}_{#2}(#3)}% voor cykels, gebruik: bv $\cyc k \num X$
\def\cors#1#2#3{{\mathsf{Corr}}^{#1}_{#2}(#3)}% voor correspondenties, als cykels

\def\pic{\mathop{\rm Pic}\nolimits}
\def\picmap{\mathop{\rm pic}\nolimits}

\def\ch#1#2#3{{\mathsf {CH}}^{#1}_{#2}(#3)}%  Chow groepen van varieteiten, gebruik: bv  $\ch k \num X$ of bv $\ch{}{} X$
\def\chow#1#2{\ch{#1}{}{#2}} % afkorting voor $\ch#1{}#2$
\begin{document}

\title{Incidence Equivalence, a survey.}% Version 2.}
\author{Chris Peters}
\address{University of Leiden and Technical University of Eindhoven}
\date{\today}

\begin{abstract} This is a survey of  results on incidence equivalence, a notion  introduced by P~Griffiths around 1970   
 when  trying to  extend  the classical properties of the Abel--Jacobi map for curves. 
Using the intermediate jacobians
and the associated Abel--Jacobi maps in higher dimension, a natural question came up:
is   the geometrically defined  incidence equivalence relation  the same 
as Abel--Jacobi equivalence, which is of  transcendental  nature?
I give an overview of results related to this question
and   to several classical conjectures that are far from resolved, such as Grothendieck's generalized Hodge conjecture. 
The motivation  for  writing   this survey came from  a recently observed  unexpected connection  of Griffiths' question 
to the asymptotic behaviour of the archimedean height pairing 
in a geometric setting.
  \end{abstract}
\maketitle
\section{Introduction}

\subsection*{Historical background}
The purely  geometric  notion of incidence equivalence has been introduced by P. Griffiths~\cite{griftrans} in order to generalize
 the classical properties of the Abel--Jacobi map
for divisors   to higher codimensions. Instead of using Weil's jacobians associated to a smooth projective variety, 
which are abelian varieties, he used his own version, the
so-called intermediate jacobians,   which are generally not projective, but have  the advantage of varying 
holomorphically in holomorphic families.
Griffiths posed the question whether for \emph{cycles algebraically equivalent to 0}   incidence equivalence is  the same as   
the transcendentally defined  Abel--Jacobi equivalence, at least up to torsion.
If so, this would lead to  a satisfying  extension  of Abel's theorem   (recalled in Example~\ref{exm:basic}) to   cycle classes
of higher  codimension. 
 He observed for instance that equality of the two equivalence relations would be implied by  Grothendieck's  generalized Hodge conjecture (which links algebraic and transcendental objects).
Independent of this conjecture equality was shown  for
 several classes of varieties such as smooth complete intersections and abelian varieties.
Since the articles~\cite{griftrans, grifschmid} from the early 1970s,  where this was discussed, it lasted until 1985 
when J. Murre~\cite{murre} proved the equality for codimension 2 cycles algebraically equivalent to 0. 
Inspired by this article and by Bloch's notion of biextension classes,
S. Müller-Stach ~\cite{mstach} took this up and showed  that equality of the two equivalence relations is intimately related to
the behaviour of  classes of Hodge level\footnote{See \S~\ref{sec:GHC} for the definition of Hodge level.}  $\le 1$ in odd-rank cohomology  under the Lefschetz decomposition.
  
  The interest in Griffiths' question   recently returned  when several people observed  that  the local geometric height pairing, 
  %an arithmetic concept, 
should be related to the 
archimedean height pairing, which is a Hodge theoretic concept. See for example~\cite{asympholms,naka,zhechen}. 
This relation  is a consequence of  two fundamental papers:
R. Hain's  result~\cite{hain}  on biextensions and   the  Brosnan--Pearlstein  Hodge theoretic results~\cite{bpjumps} on the asymptotic
behaviour of the height pairing for an admissible variation of mixed Hodge structure of biextension type.
These results apply in the case the biextension comes from two families of algebraically trivial cycles of 
complementary codimension.\footnote{This means here  
that the sum of codimensions of the  two cycles  is one more that the dimension of the fibre in which the cycles move.}
The results in the above papers then imply  that the archimedean height pairing  has   asymptotic behaviour of type $f(t)+ \alpha\log (|t|)$ 
where $t$ is any coordinate on the disc, $f$ is $C^\infty$  on the disc and  $\alpha$ 
  is a rational number independent of the choice of the coordinate.  
As recently shown by Z. Chen~\cite{zhechen}, if Griffiths' question 
has a positive answer,  $\alpha$ is the local geometric height pairing between the two cycles, which, as the  
terminology suggests, is indeed a geometric invariant.  
 
 Discussing with Z. Chen it became   clear that in view of these recent results an overview 
 of  the  classical  results related to incidence equivalence could be useful, and so I opted to write such  a review.
 An added motif to do so is that firstly,  some of the older literature 
 is apparently  not always easy to locate   and secondly, some
 arguments therein are based on knowledge quite common in the old days but seems to be forgotten by the new generation of algebraic geometers.

\subsection*{On the contents of this note}

 To set up  the required background,  the basics on Chow groups are summarized in \S~\ref{sec:chow}. For further reference, in this section 
 some attention is also given to the classical Hodge conjecture since its validity has important  consequences for all kinds of  widely believed
 conjectures and questions about algebraic cycles.
 
   Instead of morphisms, correspondences turn out to
 play a more natural role when studying algebraic cycle classes. For instance, 
  these are used to define the notion of 
 "incidence equivalence" in the title
 of this note.    I give a brief overview of their properties in  section~\ref{sec:cors}.  P. Griffiths' question is intimately related  
 to the so called  standard conjectures, not only to Grothendieck's  generalized   Hodge conjecture.
 For this reason a summary of these standard conjectures has been placed in \S~\ref{sec:gstconj}, based on 
 S. Kleiman's presentation~\cite{kleiman}.
  
  Before recalling in \S~\ref{sec:GHC}  how Griffiths' question is related to  all of this,   I recall in 
  \S~\ref{sec:AJ} the definition and the basic properties
  of the Abel--Jacobi maps.  The final section \S~\ref{sec:biexts} is devoted to a result of S. Müller-Stach~\cite{mstach}
in which  he reconsidered  Murre's result~\cite{murre} exploiting 
    a relation  with higher Chow groups via Bloch's biextensions.  For codimension $i$ cycles algebraically equivalent to 0
    this gives a criterion in terms of the cup-product pairing
    $H^{2i-1}(X)\times H^{2d-2i+1}\to H^{2d}(X)$ restricted to the $i$-th level of the coniveau filtration.
   For $i=2$   Murre showed  that this  restriction gives a   non-degenerate pairing   (cf. \cite[Lemma 5.2]{murre})
   and  gave a geometric argument  that this answers  Griffiths' question for $i=2$ positively.
   However his proof  does not seem to be complete. Fortunately, there is a rather straightforward approach based  on 
     observations  in Chapter IV of Weil's book~\cite{weilboek}
     which corrects this and even leads to    a  more general  result. Details are in   Corollary~\ref{cor:GenMurre}.
  \bigskip
  
  In the remainder of this note I shall use the following abbreviation.
  \begin{itemize}
 \item $GC(X,i) $:   Griffiths' question has a positive answer for codimension $i$ cycles on $X$.
\end{itemize}
  The main results in this overview are summarized  below. See also Summary~\ref{summar:main}, which  contains a few more technical
  results.

  \begin{thm*} Suppose $X$ is a smooth complex projective variety of dimension $d$. Then the following results hold.
    \begin{enumerate}
    \item   $GC(X,i) \iff 
 J^i_\alg(X)$ is isogenous to $\pic^i (X)$, the generalized Picard variety introduced in \S~\ref{sec:AJ}.
    Both would be true  if  Grothendieck's Hodge conjecture  $GHC (X,2i-1,i) $ holds, which is recalled below as Conjecture~\ref{conj:GHC}.
  \item  All primitive Lefschetz components of $H^{2i-1}_\alg(X)$ being  algebraic (in the sense of \S~\ref{sec:AJ})
  implies $GC(X,i)$.  This is the case if $B(X)$ holds (recalled in \S~\ref{sec:gstconj}).  In particular one has $GC(X,i)$ for abelian varieties $X$.
  \item $GC(X,1), GC(X,2) $  and $GC(X,d)$ hold.
 
  \item $GC(X,m)$ holds if $\dim X=2m+1$ and $b_{2k+1}(X)=0$ for $k\not=m$ such as for $2m+1$-dimensional complete intersections in projective space.
\end{enumerate}
\end{thm*}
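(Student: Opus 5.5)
Since this theorem summarizes results proved later, the plan is to assemble it from the pieces rather than argue it in one stroke. Throughout I take incidence equivalence for $Z\in\cyc i\alg X$ to mean that $\Gamma_*Z=0$ in $\pic{}{}(T)\otimes\bQ$ for every correspondence $\Gamma\in\cors{}{}{T\times X}$ of codimension $d-i+1$ with $T$ a smooth curve. I would organize the argument around the algebraic intermediate jacobian $J^i_\alg(X)$, the image of $\aj$ on $\cyc i\alg X$. This is an abelian variety with tangent space $H^{2i-1}_\alg(X)\cap H^{i-1,i}$. Beside it sits the generalized Picard variety $\pic^i(X)$, defined via correspondences with curves. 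There is a natural surjection $J^i_\alg(X)\to\pic^i(X)$, because a correspondence $\Gamma$ induces compatible maps $J^i_\alg(X)\to J(T)$ on the Abel--Jacobi side and $\Gamma_*$ on the cycle side. Abel's theorem on $T$ then identifies incidence equivalence with vanishing under all such composites.

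For (1): Abel--Jacobi equivalence always implies incidence equivalence by functoriality of $\aj$. Conversely, modulo isogeny the incidence-trivial cycles in $J^i_\alg(X)$ form the connected kernel $K$ of $J^i_\alg(X)\to\prod_\Gamma J(T)$. So $GC(X,i)$ holds iff $K$ is finite, i.e. iff the map $J^i_\alg\to\pic^i$ is an isogeny. For the second claim, note that by Poincaré reducibility $K$ is an abelian subvariety, with tangent space a sub-Hodge structure $V\subset H^{2i-1}_\alg(X)$ of level $1$. Assuming $GHC(X,2i-1,i)$, the space $V$ is supported on a subvariety of codimension $i-1$. Let $\tilde Y\to Y$ be a resolution of such a subvariety. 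Then $V$ comes from $H^1(\tilde Y)$, so $K$ is dominated by $\pic{}{}(\tilde Y)$. Cutting $\tilde Y$ by hyperplanes down to a curve $T$ and using Lefschetz injectivity on $H^1$ produces a curve correspondence that is nonzero on $V$. This contradicts the definition of $K$ unless $K=0$.

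For (2): I would dualize with the polarization. If the primitive Lefschetz components of $H^{2i-1}_\alg$ are algebraic, then $L^k$ and its inverse are induced by algebraic cycles on $H^{2i-1}_\alg$. The Hodge--Riemann pairing then yields a cycle-induced map $H^{2i-1}_\alg\to H^1(T)$ for suitable curves $T$ that is injective. This forces $K=0$. Standard conjecture $B(X)$ gives exactly this algebraicity, and it is known for abelian varieties (Lieberman).

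For (3): $i=1$ is classical Abel for divisors, via $\pic^0$ and the Albanese. $i=d$ is the Albanese theorem, where zero-cycles are detected by curves through Jacobian functoriality. $i=2$ is the hard case and the main obstacle. Here I would follow Murre's non-degeneracy of the cup product on $N^1H^3$ (his Lemma 5.2), completed by Weil's Chapter IV argument on correspondences between abelian varieties. Concretely, $J^2_\alg$ is dual, up to isogeny, to the abelian variety attached to $N^1H^{2d-3}$. This dual is realized through a family of curves, and non-degeneracy then shows that $K$ is orthogonal to everything, hence finite.

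For (4): when $d=2m+1$ and the odd cohomology is concentrated in degree $2m+1$, the algebraic part $H^{2m+1}_\alg$ is self-dual under the cup product. Every odd-degree class is primitive there, so the hypothesis of (2) reduces to algebraicity of a cycle inducing the pairing on the middle group. That pairing is given by the diagonal, whose odd Künneth component is algebraic because all other odd Künneth components vanish. Hence (2) applies.
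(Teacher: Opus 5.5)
\textbf{Verdict.} The first half of (1) is fine and matches the paper: your $K$ is the kernel of $\rho^i$ up to isogeny. The proposal has two genuine gaps, however: the step $GHC(X,2i-1,i)\Rightarrow GC(X,i)$, and the case $GC(X,2)$.

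\textbf{The GHC step.} You use GHC only to conclude that $V$ is supported on a codimension $i-1$ subvariety. But you have already placed $V$ inside $H^{2i-1}_\alg(X)$, whose classes have that support by definition (and by Murre's lemma $H^{2i-1}_\alg(X)_\bC=T(J^i_\alg(X))\oplus\overline{T(J^i_\alg(X))}$). So GHC is never really used, and your argument, if it worked, would prove $GC(X,i)$ unconditionally. The step that fails is ``cutting $\tilde Y$ down to a curve $T$ produces a curve correspondence nonzero on $V$''. A correspondence $\Gamma$ from $X$ to a curve $T$ is nonzero on $v\in H^{2i-1}(X)$ exactly when $v$ pairs nontrivially with the cylinder $\Gamma^*H^1(T)\subset H^{2j-1}_\alg(X)$, where $j=d+1-i$. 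Writing $v=\iota_*u$ with $u\in H^1(\tilde Y)$, the injectivity $H^1(\tilde Y)\hookrightarrow H^1(T)$ would only help if some correspondence from $X$ back to $\tilde Y$ recovered $u$ from $\iota_*u$. That is a Lefschetz-type statement you do not have. The maps you do have, $T\hookrightarrow\tilde Y\to X$ and their transposes, either land in $H^{2i-1}(X)$ or act on $H^{2j-1}(X)$.

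What GHC must supply lies on the dual side. The paper uses that the level $\le1$ parts $H^{2i-1}_\hdg(X)$ and $H^{2j-1}_\hdg(X)$ are dual under the polarization. Once GHC identifies these with the algebraic parts, $H^{2i-1}_\alg(X)\otimes H^{2j-1}_\alg(X)\to\bQ$ is non-degenerate on the first factor, and Criterion~\ref{crit:InIsAj} gives $GC(X,i)$ (Corollary~\ref{cor:OnCX}). Your tangent-space set-up suits this well. Since $H^{2j-1}_\alg(X)$ is spanned by such cylinders, it would even yield the Criterion without Müller-Stach's biextension theorem. The non-degeneracy, though, is the missing input.

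\textbf{The case $GC(X,2)$.} You treat it as the hard case and rest it on Murre's Lemma~5.2, which is precisely the step whose proof the paper regards as incomplete. You add an unspecified ``Weil Chapter~IV argument on correspondences''; the input actually needed from Weil is his formula for $\ast$ on Lefschetz components. The missing observation is that $i=2$ is a special case of (2). For $v\in H^3_\alg(X)$ write $v=v_3+Lv_1$. Since $H^1_\alg(X)=H^1(X)_\bQ$, you get $Lv_1\in H^3_\alg(X)$, hence $v_3\in H^3_\alg(X)$, so $(\Lambda,X,2)$ holds automatically.

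\textbf{Part (2).} You misread the hypothesis. It says the primitive components of $v$ lie in $H^\bullet_\alg(X)$; it does not make $L^k$ or its inverse ``induced by algebraic cycles''. What it gives, via $\ast(L^rv_{k-2r})=\pm a_{d,r}L^{d-k+r}Cv_{k-2r}$ with $k=2i-1$, is $\ast v\in H^{2j-1}_\alg(X)_\bR$ with $v\cdot\ast v>0$ for $v\neq0$. The sign changes between primitive pieces are why the components are needed individually.

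\textbf{Part (4).} Your K\"unneth argument is unjustified. The vanishing of the other odd K\"unneth components does not make the middle one algebraic; that would require the even projectors to be algebraic. It is also unnecessary. Since $H^{2m-1}(X)=0$, every class of $H^{2m+1}(X)$ is primitive, so $(\Lambda,X,m+1)$ is vacuous, and the polarization restricts non-degenerately to the sub-Hodge structure $H^{2m+1}_\alg(X)$. Note that the meaningful codimension here is $m+1$; for codimension $m$ the group $H^{2m-1}(X)$ vanishes and the claim is empty.
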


\subsection*{Notational conventions}
 \begin{itemize}
\item If  $X$ is a manifold, $H^i(X)$ stands for singular  cohomology with integral coefficients, and if $F$ is a field, $H^i(X)_F =H^i(X)\otimes F$.
\item For a smooth projective variety, $\ch i {} X$ is the Chow group of $X$ (see \S~\ref{sec:chow}),  and $\ch i {} X_\bQ=\ch i {} X \otimes\bQ$.
\end{itemize}

  \medskip
 \subsection*{Acknowledgements} 
 \begin{small}
P. Griffths' Amsterdam 1971  lectures  on  cycles of higher codimension and the relation with Abel--Jacobi maps 
opened up a new world for me. As a beginning PhD student I only partly understood  the proofs, but Prof. Murre, then  one of my teachers, 
patiently explained what I was missing.  Thanks to Phillip for the beautiful mathematics he then explained,  and thanks  to my late colleague and friend
Jaap Murre to whom   I owe much of  my   knowledge concerning cycles and Chow groups.

 These notes are initiated by questions posed to me by Zhelun Chen. 
 I thank him for   not only providing  me with  relevant recent references but also for giving  a critical reading of a  coarse first draft. 
 His remarks and questions gave me a better feeling of what is no longer standard knowledge.
 
Thanks also to  Robin de Jong for communicating  some inaccuracies in my first drafts.
  \end{small} 

\section{Chow groups}

\label{sec:chow} 
Let $X$ be a smooth complex projective variety. I shall fix some notation related to cycles.
For definitions and basic applications the reader might consult \cite{LectMotives} and the older literature mentioned in it.
   
\begin{eqnarray*}
\cyc{i}{}  X &=&    \codim   i \text{ cycles on }  X   ,\\
\cyc{i}{\hom} X&=&  \codim   i \text{ cycles  on } X  \text{ whose homology class is zero,} \\
\cyc{i}{\num} X&=&  \codim  i \text{ cycles  on } X \text{ numerically equivalent to zero,} \\
\cyc{i}{\rat} X&=&  \codim   i \text{ cycles  on }  X \text{ rationally equivalent to zero,}\\
\cyc{i}{\alg} X&=&  \codim i \text{ cycles  on } X \text{ algebraically equivalent to zero.}
\end{eqnarray*}
Note that $\cyc{i}{\rat} X \subset\cyc{i}{\alg} X\subset  \cyc{i}{\hom} X\subset \cyc{i}{\num} X$.  The corresponding rational equivalence classes then
define the various Chow groups, denoted as follows.
$$
\begin{matrix}
 \ch i \alg X    =     \displaystyle \frac{ \cyc i \alg X }{\cyc{i}{\rat} X}   \subset  
 \ch i \hom X    =   \displaystyle \frac{ \cyc i \hom X } {\cyc{i}{\rat} X} \subset \hfill \\ 
\hspace{12em}\ch i \num X  =  \displaystyle \frac{ \cyc i \num X } {\cyc{i}{\rat} X}  \subset     \chow i X   =     \cyc{i}{}X / \cyc{i}{\rat} X.
\end{matrix}  
 $$
 Since $X$ is smooth projective, $\chow \bullet X$ is a  graded commutative ring under intersection of cycle classes and
 this is compatible with the ring structure on $H^\bullet_\bQ(X )$ under
 the class map $\text{cl}: \cyc{i}{}  X \to H^{2i}_\bQ(X )$, i.e. one has a commutative diagram
 $$
 \xymatrix{
 \chow  i X \times \chow j X  \ar[d]_{\rm cl} \ar[rr]^{\rm intersection} && \chow  {i+j} X  \ar[d]_{\rm cl}\\
 H^{2i}_\bQ (X)\times H^{2j}_\bQ (X)\ar[rr]^{\rm cupproduct} && H^{2(i+j)}_\bQ (X).
 }
 $$
  One sets
\begin{eqnarray*}
 H^{2i}_\alg  (X) &=&  \im (\text{cl})\subset H^{2i}_\bQ(X ), \\
 H^{2i}_\hdg(X)&=& H^{2i}(X)_\bQ\cap H^{i,i}(X).
\end{eqnarray*}
The classical Hodge conjecture  HC$(X,i)$   states that  $H^{2i}_\alg  (X) = H^{2i}_\hdg(X)$.
For the reader's convenience I'll summarize the recent status of this conjecture. See also
\cite[p. 91 and Appendix B]{lewisHC}.
\medskip

\noindent\textbf{Cases where HC$(X,i)$ holds, $X$  a smooth projective variety of dimension $d$:}
\begin{itemize}
\item HC$(X,1)$, HC$(X,d-1)$ (in particular it holds if  $d\le 3$).
\item HC$(X,2)$, $d=4$ and $X$ uniruled or unirational.
\item HC$(X,i)$ (all $i$) holds for $X$ a flag manifold.

\item  HC$(X,i)$ (all $i$) holds for $X\subset \bP^{d+r}$ a complete intersection  in the following cases 
\begin{itemize}
\item $X$ odd dimensional.
\item  $r=1, \deg X\le 5$.
\item $X$ Fano, $\deg X=4$.
\end{itemize}
\item For abelian varieties $X$ in case 
   \begin{itemize}
   \item $X$ sufficiently general,
   \item $X$ a product of elliptic curves,
   \item $X$ simple, $d=\dim X$ prime.
   \item  $\dim X=4$,  see \cite{markman}.
   \end{itemize}

\end{itemize}

% Lieberman \cite[Theorem 1]{lieb} has shown:
% \begin{thm}[Lieberman] Let $L,\Lambda,*$ be the standard operators on the cohomology of $X$, that is,
% \begin{itemize}
%\item $L:H^i(X,\bQ)\to H^{i+2}(X,\bQ)$ is cup product with a hyperplane class,
%\item $ *: H^i(X,\bQ)\mapright{\sim}{} H^{2d-i}(X,\bQ)$ is the star-operator,
% \item $\Lambda=*^{-1} L*: H^i(X,\bQ) \to H^{i-2}(X,\bQ)$.
%\end{itemize} 
%%
% The following statements are all equivalent: 
%  %
% \begin{enumerate}
%\item  \textbf{\emph{Conjecture $D(X)$}} holds, i.e.,  $\cyc {i}{\hom} X =\cyc{i}{\num} X$.
%\item  $\dim (H^{2i}_\alg (X))= \dim \mathsf  A^{d-i} (X)$ for all $i$.
%\item For all $c\in  H^{2i}_\alg (X)  $ with $c= \sum L^r c_r$,
% the Lefschetz decomposition, one has $c_r\in H^{2i}_\alg^{i-r}(X)$.
%\item If $c\in H^{2i}_\alg (X)$, then $*c\in H^{2i}_\alg(X)$.
%\item If $c\in H^{2i}_\alg (X)$, then $\Lambda(c) \in H^{2i}_\alg(X)$
%\end{enumerate}
%
% \end{thm}
% This is known in the following cases:
% \begin{itemize}
%\item $\cyc {1}{\hom} X =\cyc{1}{\num} X$ (divisors).
%\item $\cyc {2}{\hom} X =\cyc{2i}{\num} X$.
%\item $\cyc {d-1}{\hom} X =\cyc{d-1}{\num} X$ (curves).
%\item for $X$ an abelian variety  \cite{lieb}.
%\end{itemize}

\section{Correspondences}

\label{sec:cors}

For $X$ and $Y$ smooth projective, $\dim(X)=d$,  a degree $r$ \textbf{\emph{correspondence}}
from $X$ to $Y$  is an element of
$$
\cors r {} {X ,Y}  := \ch {d+r} {\bQ}  {X\times Y}.
$$
If $f\in \ch {d+r} { }  {X\times Y}$, one calls $f$ an \textbf{\emph{integral correspondence}}.
Integral  correspondences act on Chow groups and correspondences act on cohomology with coefficients in $\bQ$. 
 In order to define these actions, one uses 
the two projections $p:X\times Y \to  X$ and $q:X\times Y\to Y$ and the induced  (standard)  pull back actions
 $$
p^*:\ch {\bullet} { }  {X }
\to \ch {\bullet} { } {X\times Y} , \quad p^*: H^\bullet(X  )_\bQ\to H^\bullet (X\times Y)_\bQ ,
$$ 
both preserving degree, and the push forward actions 
$$
q_*: \ch {\bullet}  { }  {X\times Y} \to \ch {\bullet-d} { } Y,\quad 
 q_* : H^\bullet(X\times Y)_\bQ\to H^{\bullet -2d}(Y)_\bQ.
 $$ 
 The pull back  actions  give the  usual (contravariant) morphisms.
The  push forward map on cycle classes is the usual one, while the one on cohomology
  is the so called \textbf{\emph{Gysin map}} which is defined using 
 Poincaré duality as I'll explain now.  
  Let me first  recall  that  if $X$ is a compact connected oriented manifold of dimension $n$, with fundamental cohomology class $\cl X$,
 Poincaré duality for $H^\bullet(X)_F$,  $F$   a field, is the assertion that the cup product pairing
 \[
 H ^\bullet(X)_F \otimes H^{n-\bullet} (X)_F\mapright{\text{cup}} H^n (X)_F=\cl X\otimes F  \mapright{\simeq} F
 \]
 is non-degenerate. The linear map  $f^*: H^k(Y)_F\to H^k (X)_F$
  induced by  a continuous oriented map $f:X\to Y$, $\dim Y=m$,  has as its
 transpose the map $f_*: H^{n -k}(X)_F=H^k(X)_F^*   \to H^k(Y)^*_F=H^{m-k}(Y)_F $. 
 In other words, 
 $$f_*:H^\bullet(X) _F\to H^{\bullet+n-m}(Y)_F .
 $$
 From this description one sees that the action on cycles is compatible with the action on cohomology through the class map.
 
 \begin{rmq} With  $\bZ$-coefficients  one has the same map, 
but it is defined using the traditional Poincaré-duality isomorphisms  $P_X:H^k(X)\mapright{\sim} H_{n-k}(X) $
 and $P_Y :H^k(Y)\mapright{\sim} H_{m-k}(X)$ by the formula 
$$  f_!:=P_Y^{-1}\comp f_*\comp P_X,
$$
where $f_*$ is the map induced by $f$ in homology.
 The map  $f_!$  is   the (topological) Gysin homomorphism. 
 The first description is not possible here since   integral cohomology  has in  general torsion which one loses by taking duals.
 \end{rmq}

  Using the above, one defines  a (covariant) action of degree $r$ integral correspondences $f\in \cors r {} {X ,Y} $ on cycle classes  $\alpha$
 by   
 \begin{equation}
 \label{eqn:chowacts}
 f_*(\alpha)=q_*(p^*(\alpha)\cdot f) ,
  \end{equation} 
  where the dot stands for  intersection product.
So,    if $\alpha\in \chow i X$, one has 
$p^*(\alpha)\cdot f\in \chow {i+r+d} {X\times Y}$,
implying that $f_*(\alpha)\in \chow {i+r} Y$  and so
$$
f_*: \chow \bullet X\to \chow{\bullet +r} Y.
$$
  As for the action in cohomology, one 
uses the same formula~\eqref{eqn:chowacts} where now the dot  stands for cup product.
The same argument then gives the linear map
$$
   f_*:  H^{\bullet}(X)_\bQ \to H^{\bullet+2r}  (Y)_\bQ.
$$ 
Such linear  maps are called \textbf{\emph{algebraic linear  maps}}.

\begin{proprs} Let $X,Y $ be smooth projective of dimension $d$,$e$  and let $Z$ be smooth projective. 
\begin{itemize}
\item Correspondences can be composed: if $f\in \cors r {} {X,Y}$, $g\in  \cors s  {} {Y , Z}$, then $g\comp f= \text{pr}_{XZ} 
\left\{ (f\times Z)\cdot (X\times g)\right\} \in \cors {r+s}{} {X,Z} $.
\item If $f:X\to Y$ is a morphism, then its graph is an integral correspondence  $f_* \in \cors {d-e} {}{X,Y}$  and if $g:Y\to Z$ is a 
morphism, $g_*\comp f_*= (g\comp f)_* $, where the rightmost expression is the morphism induced by the usual composition of the morphisms
$f$ and $g$.
\item The cup product maps $L: H^\bullet(X)_\bQ\to H^{\bullet+2}(X)_\bQ$ are induced by the degree $ 1 $ correspondence
$p \times H\in \cors {1 }   {}{X,  X}=\ch  {d+1} {  } X_\bQ$, $p\in X$ a point, $H$ a smooth hyperplane section of $X$, and hence these are algebraic linear maps. 
\end{itemize}
\end{proprs}
 
\medskip

 Correspondences are used to define incidence equivalence, introduced by Griffiths \cite[p. 7]{griftrans}:

\begin{dfn}    \label{dfn:inc} $\alpha \in  \ch i \hom X$ is \emph{\textbf{incidence equivalent to zero}}, written $\alpha\sim_{\rm inc} 0$,
 if  $f_*(\alpha)=0$   for all  integral correspondences  $f\in \cors {1-i}{}{X,Y}$ acting on $ \ch i \hom X$, and where   $Y$ is any   smooth 
projective variety. 
\end{dfn}
To explain the terminology,  note that $f_y= f\cdot (X\times y)$, $y\in Y$,  is   a family of cycle classes     in $X$ parametrized by $Y$.
On the other hand, $f_*(\alpha)= \text{pr}_{XY} (f\cdot (\alpha\times Y))$ is the class of a  divisor $D_{\alpha,f}$    on $Y$. 
Passing to representative cycles  on $X\times Y$,
(without changing notation, but chosen in such a way in  their  rational equivalence classes so that  $ \alpha\times Y $ meets $f$ transversally),
observe that  $y\in Y$ lies on the support of $D_{\alpha,f}   $ if and only if for some $x\in X$ the pair $(x,y)$ lies on the support of $f$, i.e.,
 the support of   $f_y$ meets  the support of $D_{\alpha,f}   $. 
For this reason $D_{\alpha,f}$ is called the \emph{\textbf{incidence divisor}} associated to  the pair $(\alpha, \set{f_y})$. So $\alpha\sim_\inc 0$
if   these  incidence divisors  (for  $\alpha$  fixed) are rationally equivalent  to zero.
%
%\begin{rmk} Note that the target of $f$ is the Picard torus $\ch 1 \hom Y= \pic^{(0)} Y$. This gives the crucial link
%to the Abel-Jacobi maps, as  explained  in Section~\ref{sec:AJ}; see in particular   diagram~\eqref{eqn:CorOnJacs}.
%As an example  take $ i=1$.  Then $\alpha$ is the class of a divisor and $f_y$ is a family of $0$-cycles. Then $D_{\alpha,f}$
%
%I shall using the above convention for denoting cycles in the corresponding classes.
%Ten for a divisor $W$ of $X$ a cycle $Z \in Z^d(X\times C)$ meets $W\times C$
%in $Z\cdot (W\times C )$ which projects to $ Z(W)$, a cycle in $C$. If $W$ is linearly equivalent to $0$, the so is $\Gamma(W)$.
%So $W$ is incidence equivalent to $0$ if $\Gamma(W)\in \pic^{(0)}(C)$ for all such pairs $(C,Z)$ which implies that $W$ is 
%Abel--Jacobi equivalent to zero.
%\end{rmk}

\section{On the  Standard Conjectures}
\label{sec:gstconj}

 These conjectures have been formulated around 1964 by Grothendieck
for smooth projective varieties over any closed field $k$. These are in terms of
a Weil cohomology theory with  coefficients in a field $F$. See \cite{kleiman} and  \cite[\S 3.1]{LectMotives} for details on this.

In this survey only the case $k=\bC$, $F=\bQ$  plays a role 
and the Weil cohomology is just singular cohomology.   Part of these conjectures concern the properties of the Lefschetz operator
which acts on any Weil cohomology but  thanks to Hodge theory the Lefschetz operator behaves better if $k=\bC$.

 \subsection{Lefschetz' theory} 
 
 Let $X$ be a  complex  projective variety  of dimension $d$ and $L$ the Lefschetz operator.
The  \textbf{\emph{(strong) Lefschetz theorem}} states that one has isomorphisms
\[
 L^i :H ^{d-i}(X)\mapright{\sim} H^{d+i}(X),\quad i=0,\dots, d , 
\]
while the \textbf{\emph{weak Lefschetz theorem}} states that for an  embedding $\iota:Y\into X$  of a smooth hyperplane section,
one has
 $$
H^i(X) \mapright{\iota^*} H^i(Y) \text{ is }\begin{cases}
\text{an isomorphism} & \text{ for } i<d-1,\\
\text{injective} & \text{ for } i=d-1.
\end{cases} 
$$
 These two theorems are not known to hold for  other Weil cohomology theories, and for these they are   part of the standard conjectures.
 \par
The operator $\Lambda: H^{i} _\bQ (X)\to H^{i-2}_\bQ   (X) $ is a quasi-inverse for $L$   in the sense that using 
  the isomorphisms $L^\bullet : H^{d-\bullet} (X)_\bQ\to H^{d+\bullet}_\bQ (X) $ from the strong  Lefschetz theorem, 
 the following diagrams  commute,  defining $\Lambda$:
 \begin{eqnarray}
 \label{eqn:Lambda}
 \begin{split}
\begin{matrix}
\text{ for } \ell=0,\dots,d-2                      &                   & \hspace{-8em}  \text{ for } \ell=2,\dots, d             \\
\xymatrix{  H^{d-\ell}(X)_\bQ \ar[d]_\Lambda \ar[r]^{L^\ell} _\sim& H^{d+\ell}(X)_\bQ\ar[d]_L \\
 H^{d-\ell-2}(X)_\bQ \ar[r]_{L^{\ell+2}}^\sim& H^{d+\ell+2}(X)_\bQ
 }                                               &       &  \hspace{-8em}    \xymatrix{
                                                              H^{d-\ell+2} (X)_\bQ     \ar[r]^{L^{ \ell-2}}_\sim & H^{d+\ell-2}(X)_\bQ     \\
                                                             H^{d-\ell}(X)_\bQ \ar[u]_{L}    \ar[r]_{\sim}^{L^{\ell}} & H^{d+\ell}(X)_\bQ \ar@{->}[u]_{\Lambda},
                                                            }
\\
               \hspace{16em}           \xymatrix{
                                       H^{d-1}(X)_\bQ \ar@<1ex>[r]^L_{\sim}  &   H^{d+1}(X)_\bQ \ar@<1ex>@{->}[l]^{\Lambda}
                                      }
\end{matrix}\end{split}
 \end{eqnarray}
The cohomology has a  \textbf{\emph{Lefschetz decomposition}}
 involving primitive cohomology $H^{d-j}_\prim(X)= \ker (\Lambda: H^{d-j}(X)_\bQ\to H^{d-2-j}(X)_\bQ)$,
$j=0,\dots,d $, namely 
\begin{equation}
\label{eqn:lefschdec}
H^{i} (X)_\bQ= \bigoplus_{\ell=i_0}^{\lfloor i/2\rfloor }  L^\ell H^{i-2\ell}_\prim(X) , \quad i_0=\max(i-d,0)  .
\end{equation} 
As in \cite[\S 2.3]{3authorsBIS}, using diagrams~\eqref{eqn:Lambda} one  deduces that the total $\bQ$-cohomology
 consists of a direct sum of disjoint strings
$$
W_\ell= \left(\xymatrix{
                                       H^{d-\ell}_\prim(X) \ar@<1ex>[r]^-L_-{\simeq}  &   LH^{d-\ell }_\prim(X)  \ar@<1ex>@{->}[l]^-{ \Lambda} 
                                           \ar@<1ex>[r]^-L_-{\simeq}
                                        &    \cdots   \ar@<1ex>@{->}[l]^-{\Lambda} \ar@<1ex>@{->}[r]^-{L}_-{\simeq}   &  \ar@<1ex>@{->}[l]^-{\Lambda}  
                                        L^{\ell-1}H^{d-\ell }_\prim  (X)\ar@<1ex>[r]^L_-{\simeq}  &   
 L^{\ell } H^{d-\ell}_\prim(X) \ar@<1ex>@{->}[l]^-{\Lambda}
                                   }   \right)   .
$$
of lengths  $\ell+1 $, $\ell=0,\dots,d$,  and $L$ and $\Lambda$ are inverses of each other on each string of length $\ge 2$.   
In particular, if  $x=\sum_{a=s}^{s+t}   L^a x_{i-2a} \in H^i(X)_\bQ $ is the primitive decomposition of $x$,  the
summand $L^{s+t}x_{i-2s-2t}$, with maximal power of $L$ is  the only summand which produces a non-zero primitive vector 
under left composition with   $\Lambda^{s+t }$, i.e., $\Lambda^{s+t }x= x_{i-2s -2t}$. Subtracting this summand from $x$
gives  $y=x-L^{s+t} x_{i-2s-2t}$ with   highest power $L^{s+t-1}$ of $L$ in its decomposition and the same procedure gives
\begin{eqnarray*}
x_{i-2s-2t+2} = \Lambda^{s+t-1 } y& = &\Lambda^{s+t-1 } (x- L^ {s+t}  \Lambda^{s+t }   )x\\
&=&
 \Lambda^{s+t-1} \comp(\id- L^{s+t } \comp \Lambda^{s+t}) x.
\end{eqnarray*} 
Continuing in this way  one  finds a (in general non-commutative) polynomial $\Phi_{i,a}(L,\Lambda)\in \bZ[L,\Lambda]$ such that\footnote{See also \cite[Th\'eor\`eme 3, \S I.4]{weilboek}.}
%is the Lefschetz decomposition~\eqref{eqn:lefschdec} of $x$, each nonzero summand  $L^a x_{i-2a}$ belongs to a certain $W_\ell$,
%since $a=k$, $i-2a=d-\ell$ gives $\ell=d-i+2a$ and so 
\begin{equation}
\label{eqn:OnLambda}
x_{i-2a} =     \Phi_{i,a}(L,\Lambda)x,
\end{equation}   
which will be used below, for example to prove Lemma~\ref{lem:OnConjBeven}.

Over $\bC$  another standard conjecture,  the \textbf{index conjecture}  $I^i(X,L)$ is true. It asserts that the  product pairing on primitive algebraic classes  
$$
Q: H^{2i}_{\alg,\prim} \times H^{2i}_{\alg,\prim}\mapright{\quad} \bQ,\quad Q( x,y)= (-1)^i L^{d-2i} x\cdot y 
$$ 
 is positive definite.  This is already the case
 if  $x,y$ are primitive Hodge classes of type $(i,i)$, as  a consequence of the Hodge--Riemann bilinear relations ~ \cite[Theorem 1.33]{mht}
which I now recall.
 
 \begin{thm} \label{thm:HR} Let $X$ be a  complex projective manifold of dimension $d$ and let
 $$
 H^r_\prim (X)_\bC=\bigoplus_{p+q=r}H^{p,q}_\prim (X)
 $$
 be the Hodge decomposition of primitive cohomology, 
 \begin{equation}\label{eqn:Weil}
  C:H^r (X)_\bR\to H^r(X)_\bR, \qquad C|_{H^{p,q}_\prim (X)}=\ii^{p-q},
 \end{equation} 
  the Weil operator and 
  \begin{equation}\label{eqn:HR}
 Q(x,y)= (-1)^{\half r(r-1)} L^{d-r} x \cdot   y ,\quad x,y\in H^r_\prim (X)_\bC
 \end{equation}
 the bilinear \textbf{Hodge--Riemann  form}.
% This can be extended to the full cohomology roup $H^i(X)_\bR$, extending $Q$ by
%\begin{equation}\label{eqn:HR2}
% Q(x,y)= (-1)^{\half i(i-1)} \sum _r (-1)^r Q(x_{i-2r}, y_{i-2r}) ,
% \end{equation}
% where $x=\sum L^rx_{i-2r}$, $y=\sum L^r y_{i-2r}$  are  the primitive decompositions.
 Then the two Hodge--Riemann relations are valid
 \begin{description}
\item[Q(1)] $Q(H^{p,q}_\prim(X),H^{r,s}_\prim(X))=0, \text{  if } (p,q)\not=(s,r)$,
\item[Q(2)] $Q(Cx,\bar x) \ge 0$, for $ x\in H^{k}_\prim(X) $,   and $Q(Cx,\bar x) =x\cdot x >0$ if $x\not=0$,
 where $x\cdot x=\int_X x\cup x$, the standard "intersection" product. 
\end{description}
 \end{thm}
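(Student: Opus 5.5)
The Hodge--Riemann relations are classical and the paper cites \cite[Theorem 1.33]{mht}; I would nonetheless organize the proof in the standard way, through the local (pointwise) linear algebra of a Kähler form combined with Hodge theory. Fix a Kähler metric whose Kähler class is the hyperplane class defining $L$. By the Hodge theorem every class in $H^r_\prim(X)_\bC$ has a unique harmonic representative. Because the Kähler identities make $L$ and $\Lambda$ commute with the Laplacian, this representative is a harmonic form that is primitive pointwise. It also has pure type $(p,q)$ when the class does. So both relations reduce to statements about primitive forms on a single Hermitian vector space.

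For \textbf{Q(1)}, I would use a type count. If $x$ has type $(p,q)$ and $y$ has type $(r,s)$ with $p+q=r+s$, then $L^{d-r}x\wedge y$ has type $(p+r+d-(p+q),\ q+s+d-(p+q))$. The integral of this form vanishes unless the type is $(d,d)$, which forces $r=q$ and hence $s=p$. Primitivity plays no role here; the same vanishing holds on all of $H^r(X)_\bC$.

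For \textbf{Q(2)}, the key input is the pointwise identity for a primitive $(p,q)$-form $\alpha$ with $p+q=r$:
\[
*\,\alpha=(-1)^{\half r(r+1)}\,\ii^{p-q}\,\frac{1}{(d-r)!}\,L^{d-r}\alpha .
\]
This is Weil's formula \cite[Théorème 2, \S I.4]{weilboek}. I would prove it by representation theory of the $\mathfrak{sl}_2$ generated by $(L,\Lambda,[L,\Lambda])$ on $\bigwedge^\bullet V^*$. Alternatively one can check it on an explicit basis of primitive forms in a unitary frame: decompose $\alpha$ into monomials $dz_I\wedge d\bar z_J$ with $I\cap J$ handled via the standard normal form, and compute $*$ directly. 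Granting the identity, take the harmonic primitive representative of $x$ and write $x=\sum x^{p,q}$. Then
\[
Q(Cx,\bar x)=\sum_{p+q=r}(-1)^{\half r(r-1)}\ii^{p-q}\int_X L^{d-r}x^{p,q}\wedge\overline{x^{p,q}} .
\]
The cross terms drop out by Q(1). Substituting Weil's formula turns each summand into a positive multiple of $\int_X x^{p,q}\wedge *\overline{x^{p,q}}=\|x^{p,q}\|^2_{L^2}$. Some care is needed because conjugation and the order of factors introduce the sign $(-1)^{r}$, and this sign combines with $(-1)^{\half r(r+1)}$ to give the stated $(-1)^{\half r(r-1)}$. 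The resulting sum is strictly positive unless the harmonic form is $0$, that is, unless $x=0$. The paper's normalization "$Q(Cx,\bar x)=x\cdot x$" should be read as the $L^2$ norm of this representative, up to the positive factor $(d-r)!$.

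The main obstacle is the pointwise Weil identity and its precise sign, including the interplay of $\ii^{p-q}$ with $(-1)^{\half r(r\pm1)}$. I would first try to verify it in the simplest case, $d=1$ with $r=1$, where $*dz=-\ii\,dz$ up to convention. I would then extend it by the $\mathfrak{sl}_2$ argument, which shows that both sides are determined on lowest weight vectors up to a universal constant. That constant is then fixed by a single example, such as $\alpha=dz_1\wedge\cdots\wedge dz_p\wedge d\bar z_{p+1}\wedge\cdots\wedge d\bar z_{p+q}$, which is primitive in $\bC^d$ when $p+q\le d$.
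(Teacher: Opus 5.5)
The paper does not prove this theorem; it only cites \cite[Theorem 1.33]{mht}. Your proof is the standard one and it is correct. You use unique harmonic representatives, and $[L,\Delta]=[\Lambda,\Delta]=0$ together with type-preservation of $\Delta$, to get pointwise primitive forms of pure type. A bidegree count then gives Q(1), and Weil's formula for $\ast$ on primitive forms gives Q(2). The sign bookkeeping you flag as the main risk also works out. For $\alpha$ primitive of type $(p,q)$ with $p+q=r$, your identity gives $L^{d-r}\alpha=(d-r)!\,(-1)^{\half r(r+1)}\,\ii^{q-p}\ast\alpha$. Since $\ast\alpha\wedge\bar\alpha=(-1)^r\,\bar\alpha\wedge\ast\alpha$, one gets
\[
(-1)^{\half r(r-1)}\,\ii^{p-q}\int_X L^{d-r}\alpha\wedge\bar\alpha=(d-r)!\,\|\alpha\|^2_{L^2}.
\]
Compared with the bare citation, your route shows that Q(2) is the same computation the paper carries out later in \S\ref{sec:biexts}. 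There, the formula for $\ast(L^r v_{k-2r})$ is used to obtain \eqref{eqn:weil}. When the power of $L$ is $0$, that formula is exactly your identity with $a_{d,0}=1/(d-k)!$. This also confirms your reading of ``$x\cdot x$'' in Q(2) as the $L^2$-norm of the harmonic representative. The paper itself writes $v\cdot v=\int_X v\wedge\ast v$ in \S\ref{sec:biexts}. Taken literally, $\int_X x\cup x$ has the wrong degree unless $k=d$, and it vanishes for odd $k$.

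There is one imprecision in your plan for proving Weil's identity. Write $P^r$ for the pointwise primitive $r$-forms. $\mathfrak{sl}_2$-theory alone only gives $\ast\alpha\in\ker L=L^{d-r}P^r$ in degree $2d-r$. To conclude $\ast\alpha=c_{p,q}L^{d-r}\alpha$, you also need two facts:
\begin{itemize}
\item $\ast$ and $L$ are $U(d)$-equivariant;
\item the pointwise primitive $(p,q)$-forms form an irreducible $U(d)$-module, so Schur's lemma applies.
\end{itemize}
The constant $c_{p,q}$ then depends on $(p,q)$; it is not universal. So you need one test form for each $(p,q)$, and your family $dz_1\wedge\cdots\wedge dz_p\wedge d\bar z_{p+1}\wedge\cdots\wedge d\bar z_{p+q}$ provides exactly that. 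The identity is classical, and the paper uses it without proof, so citing it is also acceptable.
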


\subsection{Remaining standard conjectures}  
There are further conjectures playing a role below which are not known to be true in general, even over $\bC$:
\begin{itemize}

\item \textbf{Conjecture}     $A(X,L)$: $L^{d-2i}:H^{2i}_\alg(X) \to H^{2d-2i}_\alg(X)$ is an isomorphism.
 
 \item \textbf{Conjecture}   $B (X)$: The linear maps  $\Lambda $ are algebraic,  that is, they are induced by a self-correspondence 
(necessarily of  degree $-1$). 
  
  \item \textbf{Conjecture}  $ D(X)$: $\cyc {i}{\hom} X =\cyc{i}{\num} X$.
\end{itemize}

\medskip 
As to the status of the conjectures see~\cite{kleiman}:
\begin{lemma} 
\begin{itemize}
  
\item Over $\bC$ the conjecture  $A(X,L)$ follows from the Hodge conjecture and then $A(X,L)\iff D(X) $
(see \cite[3.9 Corollary]{kleiman}).

\item   $ B(X)$ does not depend on the chosen hyperplane class to define $L$ by~\cite[2.10 Corollary]{kleiman}.

\item   $ B(X)\implies A(X,L)$ by~\cite[2.2 Corollary]{kleiman}.

\item  $B(X)$ as well as $D(X)$ hold  if $X$ is an abelian variety by~\cite[Theorem 4]{lieb}, \cite[2. Appendix]{kleiman}.
\end{itemize}

\end{lemma}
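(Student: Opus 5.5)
This lemma collects four known facts from the literature, so the plan is to verify each item by recalling the mechanism behind the cited result rather than proving anything new.

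\emph{First item.} Over $\bC$, assuming the Hodge conjecture, $A(X,L)$ follows from hard Lefschetz. The map $L^{d-2i}:H^{2i}(X)_\bQ\to H^{2d-2i}(X)_\bQ$ is an isomorphism of $\bQ$-Hodge structures, up to Tate twist. It therefore restricts to an isomorphism $H^{2i}_\hdg(X)\to H^{2d-2i}_\hdg(X)$, and by HC these spaces are $H^{2i}_\alg$ and $H^{2d-2i}_\alg$.

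For $A\iff D$ I follow Kleiman's argument.
\begin{itemize}
\item Homological equivalence always implies numerical equivalence.
\item Conversely, suppose $\alpha\in H^{2i}_\alg(X)$ is orthogonal to all of $H^{2d-2i}_\alg(X)$.
\item The Lefschetz decomposition of $\alpha$ is algebraic, using $A$ for the relevant degrees; this is the step I expect to need the most care, namely that $A$ in all degrees yields algebraicity of the primitive pieces.
\item Pair $\alpha$ against $L^{d-2i}$ applied to the (algebraic) primitive components. Positivity of the Hodge--Riemann form on primitive Hodge classes (Theorem~\ref{thm:HR}, i.e.\ the index statement $I^i(X,L)$) then forces every component to vanish, so $\alpha=0$.
\end{itemize}

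\emph{Second item.} Independence of $B(X)$ from the chosen hyperplane class follows Kleiman's 2.10. $B(X)$ is equivalent to the algebraicity of the Künneth projectors together with $\Lambda_L$. One shows this condition is intrinsic by expressing $\Lambda_{L'}$ through $L'$, $L$ and $\Lambda_L$, using a formula of the type \eqref{eqn:OnLambda} for the primitive projectors; all of these are algebraic once $\Lambda_L$ is.

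\emph{Third item.} $B\implies A$ is immediate from the diagrams \eqref{eqn:Lambda}. The inverse of $L^{d-2i}$ on $H^{2d-2i}(X)_\bQ$ is a polynomial in $L$ and $\Lambda$, namely $\Lambda^{d-2i}$ corrected via the string decomposition. When $\Lambda$ is algebraic, this inverse maps algebraic classes to algebraic classes, so $L^{d-2i}$ is surjective on algebraic classes. Injectivity comes from hard Lefschetz.

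\emph{Fourth item.} For abelian varieties I cite Lieberman. Using the endomorphisms $n_X$, which act as $n^k$ on $H^k$, one shows the Künneth components are algebraic. One then realizes $\Lambda$ through an explicit correspondence built from the Poincaré bundle, or via the Fourier transform. $D(X)$ follows from $B$ together with the index theorem and the first item's argument, which works on this algebraic subring without needing HC.

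The main obstacle overall is the $A\Rightarrow D$ direction, where algebraicity of the primitive components must be extracted carefully.
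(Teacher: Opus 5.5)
The paper gives no argument for this lemma beyond citing Kleiman and Lieberman, so your sketches have to stand on their own. Items three and four do. In item three, with the normalization of $\Lambda$ fixed by \eqref{eqn:Lambda}, one even has $\Lambda^{d-2i}|_{H^{2d-2i}(X)_\bQ}=(L^{d-2i})^{-1}$ exactly, so no correction along strings is needed. Items one and two, however, each contain a genuine gap.

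\emph{First item.} Your bullets prove only $A\Rightarrow D$. The remark that homological equivalence implies numerical equivalence is the trivial inclusion $\cyc{i}{\hom}X\subset\cyc{i}{\num}X$. That is one half of the \emph{equality} asserted by $D(X)$, not the implication $D\Rightarrow A$. That direction needs its own, short, argument:
\begin{itemize}
\item under $D(X)$ the pairing $H^{2i}_\alg(X)\times H^{2d-2i}_\alg(X)\to\bQ$ is non-degenerate on both sides;
\item hence $\dim H^{2i}_\alg(X)=\dim H^{2d-2i}_\alg(X)$;
\item $L^{d-2i}$ is injective between them by hard Lefschetz, so it is an isomorphism.
\end{itemize}
You should also write out the step you flag as delicate; it is a short induction. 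For $x\in H^{2i}_\alg(X)$ with $2i\le d$, $A$ in degree $2i-2$ gives an algebraic $y$ with $L^{d-2i+2}y=L^{d-2i+1}x$. Then $x-Ly$ is primitive and algebraic, and you iterate on $y$. For $2i>d$, first write $x=L^{2i-d}z$ with $z$ algebraic. The test classes should be $L^{d-2i+r}\alpha_r$ for $\alpha_r\in H^{2i-2r}_\prim$, i.e.\ $L^{d-2i}$ applied to the summands $L^r\alpha_r$, not to the $\alpha_r$ themselves. Orthogonality of distinct Lefschetz components then reduces $\alpha\cdot L^{d-2i+r}\alpha_r$ to $L^{d-2(i-r)}\alpha_r\cdot\alpha_r$, to which Theorem~\ref{thm:HR} applies.

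\emph{Second item.} Here the key idea is missing. A formula of type \eqref{eqn:OnLambda} involves a single Lefschetz operator and its own $\Lambda$. It produces $L$-primitive components and gives no handle on $(L'^k)^{-1}\colon H^{d+k}(X)_\bQ\to H^{d-k}(X)_\bQ$, which is what $\Lambda_{L'}$ is built from via \eqref{eqn:Lambda}. What actually works is an inversion argument:
\begin{itemize}
\item $B(X)$ for $L$ makes the K\"unneth projectors and $(L^k)^{-1}=\Lambda_L^k|_{H^{d+k}}$ algebraic;
\item so $M=\Lambda_L^k\circ L'^k$, cut down by the K\"unneth projector, is an algebraic automorphism of $H^{d-k}(X)_\bQ$;
\item by Cayley--Hamilton, $M^{-1}$ is a polynomial in $M$ with rational coefficients, the constant term being a multiple of the projector, hence algebraic;
\item therefore $(L'^k)^{-1}=M^{-1}\circ\Lambda_L^k$ is algebraic, and so is $\Lambda_{L'}$.
\end{itemize}
Put differently, $B(X)$ is equivalent to algebraicity of the K\"unneth projectors together with the existence of \emph{some} algebraic isomorphism $H^{d+k}(X)_\bQ\to H^{d-k}(X)_\bQ$ for each $k$. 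That condition visibly does not depend on $L$.
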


An easy consequence of the above results on the Lefschetz decomposition is the following result.

\begin{lemma}  \label{lem:OnConjBeven}  Suppose that  $B(X)$ holds, then if $x\in H^{2i }_\alg(X) $, 
   all primitive components of the Lefschetz decomposition of $x$  belong to $H^{2\bullet}_\alg(X)$.
  \end{lemma}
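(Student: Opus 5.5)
The plan is to combine formula~\eqref{eqn:OnLambda} with the fact that algebraic linear maps preserve algebraic classes. Write the Lefschetz decomposition of $x\in H^{2i}_\alg(X)$ as $x=\sum_a L^a x_{2i-2a}$ with $x_{2i-2a}\in H^{2i-2a}_\prim(X)$. By~\eqref{eqn:OnLambda} each primitive component is
$$
x_{2i-2a}=\Phi_{2i,a}(L,\Lambda)\,x ,
$$
where $\Phi_{2i,a}$ is a non-commutative polynomial with integer coefficients in $L$ and $\Lambda$. So it suffices to show that $\Phi_{2i,a}(L,\Lambda)$ sends $H^{2\bullet}_\alg(X)$ into $H^{2\bullet}_\alg(X)$.

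The first step is to check that every algebraic linear map $f_*$, induced by a correspondence $f\in\cors r{}{X,X}$, sends $H^{2k}_\alg(X)$ into $H^{2k+2r}_\alg(X)$. If $x=\mathrm{cl}(\alpha)$ with $\alpha\in\chow k X_\bQ$, then $f_*x=q_*(p^*x\cdot \mathrm{cl}(f))$. Pull-back, cup product and Gysin push-forward are compatible with the class map: the diagram in \S~\ref{sec:chow} handles products, and \S~\ref{sec:cors} notes compatibility of the actions on cycles and cohomology. Hence $f_*x=\mathrm{cl}(f_*\alpha)$ is algebraic. Since $H^{2\bullet}_\alg(X)$ is defined with $\bQ$-coefficients, it is a $\bQ$-subspace, so rational combinations of correspondences also preserve it.

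The second step is to observe that $L$ is algebraic by the Properties in \S~\ref{sec:cors}, and $\Lambda$ is algebraic by hypothesis $B(X)$. Compositions of algebraic linear maps are algebraic, since correspondences compose compatibly with their actions on cohomology. Integer linear combinations of algebraic maps of the same degree are also algebraic, being induced by the corresponding combinations of correspondences. Therefore each monomial in $L,\Lambda$, and hence $\Phi_{2i,a}(L,\Lambda)$, is induced by a correspondence. Alternatively, one can avoid composition entirely by applying the first step monomial by monomial, using that $H^{2\bullet}_\alg$ is closed under sums.

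The only point needing care is degree bookkeeping. The individual monomials of $\Phi_{2i,a}$ could a priori have different degrees, but each is applied to $x$, and the resulting sum lands in degree $2i-2a$. Each summand is algebraic in its own degree, and only the sum in $H^{2i-2a}$ matters, so this causes no difficulty. One also needs that $\Lambda$ makes sense as a map on all of $H^\bullet(X)_\bQ$ (via the diagrams~\eqref{eqn:Lambda}) and that $B(X)$ provides a single correspondence inducing it in all degrees. I expect this to be the main, though minor, obstacle; if needed, one can use a separate correspondence for each degree, which suffices. Concluding, $x_{2i-2a}\in H^{2i-2a}_\alg(X)$ for every $a$.
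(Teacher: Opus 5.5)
Your proposal is correct and follows the paper's proof: you express each primitive component as $\Phi_{2i,a}(L,\Lambda)x$ via \eqref{eqn:OnLambda}, use that $L$ is algebraic and that $\Lambda$ is algebraic under $B(X)$, and conclude that correspondences send algebraic classes to algebraic classes. The only difference is that you spell out details the paper leaves implicit, namely the compatibility of correspondence actions with the class map and the degree bookkeeping.
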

  \begin{proof}  
 The correspondences $L$ and $\Lambda$ being algebraic,
  the same holds for the polynomials  $\Phi_{i,a}(L,\Lambda)$ in 
  equation~\eqref{eqn:OnLambda}. So this equation 
implies  that  if $x$ is algebraic, the primitive components $x_{2i-2a}  $ of the Lefschetz decomposition of $x$ are all algebraic.
   \end{proof}

\section{Abel-Jacobi maps} 

\label{sec:AJ}

\subsection{Tori revisited} First I recall some elementary facts about complex tori $T=V/\Gamma$, where $V$ is the universal cover of $T$ viewed 
as a complex vector space, and $\Gamma$ is a maximal integral lattice of $V$. The \textbf{\emph{dual torus}}  by definition is equal to
$T^*=\bar V^*/\Gamma^*$, where the bar denotes complex conjugation,  $V^*$ the dual space of $V$,
and $\Gamma ^*=\Hom_\bZ(\Gamma,\bZ)$ the dual lattice. If $T$ is an abelian variety, so is $T^*$ and then $T^*=\pic ^0 T$
which is the set of homological trivial  divisor classes  on $T$ with the addition operation.

A holomorphic map  $f:T_1=V_1/\Gamma_1\to V_2/\Gamma_2$ beween complex tori  sends the origin of $T_1$ to a point $a\in T_2$
and such a map  is then of the form $t_a\comp   F$, where $t_a$ is the translation $x\mapsto x+a$ and $  F$
 is  induced by a complex linear map  $\tilde F:V_1\to V_2$. These induce homomorphisms between the tori, but even if $\tilde F$ is an isomorphism,
 $f$ might not be an isomorphism, since the induced map on  the lattices need not be an isomorphism. Indeed, in general
  $f$  is only an isogeny. This  method of gathering information  on $f$ is applied to obtain several of the main results  in this note and
  this   is the underlying reason why one can only show that such $f$ are isogenies   instead of isomorphisms.   
 
\subsection{Tori associated to odd weight Hodge structures} In case $V$ carries  an integral odd weight    Hodge structure $(V, F^\bullet )$, there are two  canonical ways to associate
a complex torus to it using operators defined by the Hodge decomposition $V=\oplus V^{p,q}$.
    The first is  the Weil operator (see equation~\eqref{eqn:Weil}) which in this case is given  by
 $C|_{V^{p,q}}= \ii ^{p-q} $ for which $C^2=-\id$ since  the weight $p+q$ is odd.  This is a real operator and so it gives 
 $V_\bR$ a complex structure and yields  Weil's jacobian $J_W(V)$ (see \cite[p. 82]{weilboek}). 
  Weil further  shows (loc. cit.)  that  the Lefschetz decomposition gives a polarizing form on $J_W(V)$
 and so gives an abelian variety.   If one however uses the operator $C'$ which is multiplication by $\ii$ for $p>q$ and by $-\ii$ if $p<q$ the resulting complex
 torus is Griffiths'  intermediate jacobian $J(V )$ (see \cite[\S 2]{grifbombay}) whch in general is not  an abelian variety.
  If the Hodge decomposition  has only two components,  clearly $C=C'$ and hence:
 
 \begin{obsv} \label{obs:JacIsAb}  Griffith's intermediate jacobian $J(V )$ is an abelian variety in case $V_\bC= V^{i,i-1} \oplus V^{i-1,i} $,
 i.e., if the level of $V$ is $\le 1$, where \emph{level}$=\max_{V^{p,q}\not=0} |p-q|$.
 \end{obsv}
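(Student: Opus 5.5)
The statement to prove is Observation~\ref{obs:JacIsAb}: if $V_\bC=V^{i,i-1}\oplus V^{i-1,i}$, then $J(V)$ is an abelian variety. The plan is to reduce it to Weil's result, recalled just before the observation, that the Weil jacobian $J_W(V)$ is an abelian variety. It therefore suffices to check that the complex structures $C$ and $C'$ on $V_\bR$ coincide under the level hypothesis. Both tori are then the same real torus $V_\bR/V_\bZ$ with the same complex structure, so they are isomorphic as complex tori.

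\emph{Comparing $C$ and $C'$.} The weight is $2i-1$, and on $V^{i,i-1}$ we have $p-q=1$. Hence $C$ acts there as $\ii^{1}=\ii$, and since $p>q$, $C'$ also acts as $\ii$. On $V^{i-1,i}$ we have $p-q=-1$, so $C$ acts as $\ii^{-1}=-\ii$, and since $p<q$, $C'$ acts as $-\ii$ as well. Because $V_\bC$ is the direct sum of these two summands, $C=C'$ on $V_\bC$, and therefore on $V_\bR$.

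\emph{Conclusion.} Each torus is $V_\bR/V_\bZ$ equipped with the complex structure induced by the respective operator, so $J(V)=J_W(V)$ as complex tori. Weil's theorem gives a polarization on $J_W(V)$ coming from the Hodge--Riemann form, via Theorem~\ref{thm:HR} applied to the Lefschetz decomposition. Hence $J(V)$ is an abelian variety.

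One point needs a check: the lattice is preserved under this identification, since it is $V_\bZ$ in both cases. If one prefers to avoid quoting Weil, the alternative is to verify the Riemann conditions directly. Take $E(x,y)=Q(x,y)$ with $Q$ the Hodge--Riemann form; it is integral on $V_\bZ$ after scaling. Then $E(Cx,Cy)=E(x,y)$ holds by Q(1), and $E(Cx,x)>0$ for $x\neq 0$ holds by Q(2). On a general odd-weight $V$ this positivity fails for $C'$, but here it is immediate because $C'=C$. That step is the only place where the level $\le 1$ hypothesis is essential.
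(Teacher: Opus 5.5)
Your proposal is correct and is essentially the paper's own argument: under the level $\le 1$ hypothesis, $C$ and $C'$ both act as $\ii$ on $V^{i,i-1}$ and as $-\ii$ on $V^{i-1,i}$, so $J(V)=J_W(V)$, which is an abelian variety by Weil's polarization (tacitly, as in the paper, $V$ must be polarizable, e.g.\ a Hodge substructure of some $H^{2i-1}(X)$). In your optional direct verification on a non-primitive $V$, use $E(x,y)=\sum_r Q(x_{k-2r},y_{k-2r})$ over the Lefschetz components rather than the plain form $(-1)^{\half k(k-1)}L^{d-k}x\cdot y$; the plain form restricts to $(-1)^r Q$ on $L^rH^{k-2r}_\prim(X)$ and so fails positivity for odd $r$.
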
 

 These constructions can be  applied to odd rank cohomology groups $H^{2i-1}(X)$ of complex projective manifolds $X$ and 
 give $J^i(X)_W$, in the Weil case, respectively $J^i(X)$ in Griffiths' case.
  If $X$ is a  complex projective manifold $X$  the latter  can be described in an alternative fashion as
$$
  J^i(X)  = H^{2i-1}(X)_\bC  / [F^i H^{2i-1} (X)+\Im(H^{2i-1} (X)\to H^{2i-1}(X)_\bC)],
$$
 where $F^\bullet H^{2i+1}(X)_\bC$ is the Hodge filtration of the standard Hodge structure.
 So here the universal cover is  $  H^{2i-1}(X)_\bC/ F^i H^{2i-1}$ and by Poincaré duality
 \[
 ( \overline{H^{2i-1}(X)_\bC/ F^i H^{2i-1}})^*  = H^{2d-2i+1}(X)_\bC/F^{d+1-i}
 \]
 and so 
 $$
   \text {  the dual of } J^i(X) \text{ is } J^{d+1-i}(X) .
 $$
  
 \subsection{On the Abel--Jacobi map}  
 
 Since the cup product  
 $$ H^{2i-1} (X) _\bC \otimes H^{2d-2i+1}(X)_\bC\to \bC
 $$ is zero when restricted to $F^i\otimes F^{d-i+1}$
one has an induced  linear isomorphism  
 $$
 H^{2i-1}(X) _\bC/F^i \mapright{\simeq}  (F^{d-i+1}H^{2d-2i+1}(X))^*.
 $$ 
 It  sends the image of $H^{2i-1}(X)$ in $H^{2i-1}(X)_\bC$ to the image of the dual of $H^{2d-2i+1}(X)$ in the target  of this isomorphism
 and so gives yet another description of $J^i(X)$ as the dual of $J^{d+2-i}(X)$.  This 
 is used to define the Abel--Jacobi map
 \begin{eqnarray*}
 \psi^i_{\rm AJ}:  \ch i \hom X & \to & J^i(X)\\
 \alpha  &\mapsto&  \int_\gamma \omega  \pmod {H^{2i-1} (X))}.
\end{eqnarray*} 
 More explicitly, $\alpha\in  \ch i \hom X$ is sent  to the   functional  $f_\alpha : F^{d+1-i}H^{2d-2i+1} (X)\to \bC$ 
  defined as  $f_\alpha (\omega)=  \int_\gamma \omega$, where $\omega$  is a closed $2d-2i+1$-form whose class belongs to
  $F^{d+1-i}H^{2d-2i+1} (X)$, and $\gamma$ is  a chosen
   $2d-2i+1$-chain  with  $  \alpha=\partial \gamma$. 
   The ambiguity that this choice brings   is compensated by
   dividing out by the dual of the lattice $H^{2d-2i+1} (X)$ modulo  torsion.
% To see the relation with numerical equivalence consider the diagram
% \[
% \xymatrix{ \ch i\hom X \ar[d]^{\psi_\aj^i} \hspace{-2em}   &\times     &\hspace{-2em}    \ar[d]^{\psi_\aj^{d+1-i}}  \ch {d+1-i} \hom X \ar[rr] ^{\text{intersection}} &&  \ar[d]_{\psi_\aj^{d+1}} \ch d \hom X\\
%                                        J^i(X)             \hspace{-2em}      &\times    &   \hspace{-2em}     J^{d+1-i}(X)                                                 \ar[rr]  &  &   \bC^\times }
% \]
%If %$\psi_\aj^i (\alpha)\cdot \psi_\aj^{d+1-i}=\psi (\alpha\cdot \beta)=0$
%for $\alpha\in \ch i \hom X$ one has $\alpha\cdot \beta=0$,
% for all $\beta\in   \ch {d+1-i} \hom X$, then $\alpha\sim_\num 0$.
 This leads to an equivalence relation:

\begin{dfn} A cycle class $\alpha\in \ch i \hom X$ is \emph{\textbf{Abel-Jacobi equivalent to zero}} 
 %, written $\alpha\sim_{\rm AJ}0$ 
if it is in the kernel of $ \psi^i_{\rm AJ}$. 
The subgroup it defines is denoted $  \ch i {\rm AJ}  X$.
\end{dfn}

\begin{exmple} \label{exm:basic}
 The theorem of Abel for divisors classes on a curve   states that the associated Abel--Jacobi map  
 $\psi_\aj^1 : \ch 1 \hom    X\to \pic ^{(0)} X$ is an isomorphism which implies
 in particular that incidence equivalence coincides with Abel--Jacobi equivalence. For divisors on higher dimensional projective varieties 
 this is also the case as demonstrated in a transcendental fashion by K. Kodaira (cf.~\cite[Theorem 7]{kodGreen}) and 
 later by D. Lieberman in ~\cite{liebIJ}  who paved the way for the algebraic approach of H. Saito in~\cite{hsaito}.

Dually, for $0$-cycles one has the Albanese variety $\alb (X)$ and the Albanese map $\psi_\aj^d:\ch d \hom X \to \alb (X)= J^d(X)$.
However the Albanese kernel  $\ch d \aj X$ is in general non-zero but since the Albanese variety is dual to the Picard variety, incidence
equivalence coincides with Albanese equivalence. See Corollary~\ref{cor:OmDualPics}. 
\end{exmple} 

Since correspondences act on cohomology, they also induce maps on the level of intermediate jacobians.
Here one restricts to cycles  algebraically equivalent to $0$ and one sets
$$
 J^k_\alg(X):= \psi^k_{\rm AJ}(\ch * \alg {X} ).
 $$  
By \cite[Theorem 12.24]{lewisHC} an integral correspondence $   f  \in \cors r {} {X ,Y}$ induces  a  homomorphism  
\begin{equation*}%\label{eqn:CorOnJacs}
  f_* : J^i _\alg (X) \to J^{i+r}_\alg (Y),
\end{equation*} 
fitting in  a commutative diagram involving Abel-Jacobi maps:
\begin{equation}
\label{eqn:CorOnJacs}
 \begin{split}
 \xymatrix{
  \ch i  \alg  X \ar[r]^{f_*} \ar@{->>}[d]_{\Psi^i_{\rm AJ}}&  \ch {i+r}  \alg  Y  \ar@{->>}[d]_{\Psi^{i+r}_{\rm AJ}}\\
J^i_\alg(X) \ar[r]^{f_*} & J^{i+r}_\alg(Y).
}
\end{split}
\end{equation}
 In the special case of incidence equivalence $r=1-i$ and so 
the diagram above  reads
\begin{equation*}
 \xymatrix{ 
\ch i \alg X \ar[r]^{f_*} \  \ar@{->>}[d]_{\Psi^i_{\rm AJ}}&  \ch {1}  \alg Y \ar[d]_{\simeq}\\
\hspace{4em}J^i_\alg(X)\subset J^i(X) \ar[r]^-{f_*} & J^{1}(Y)=\pic^0(Y).
}
 \end{equation*}
From this diagram one deduces that
\begin{equation}
\label{eqn:CruxInc}  \ker(\Psi^i_{\rm AJ} ) \cap \ch i  \alg  X \subset K^i(X):= \sett{\alpha\in \ch i \alg X}{\alpha\sim_\inc 0}.
\end{equation}
Problem B  in \cite{griftrans} is known as 

\medskip
\noindent 
$GC(X,i)$: \emph{after tensoring with $\bQ$, the inclusion \eqref{eqn:CruxInc} is an equality, i.e., 
on classes in $\ch i \alg X \otimes   \bQ$  incidence equivalence coincides with Abel--Jacobi equivalence.}
 
  \subsection{Further properties of the Abel--Jacobi map}
 There is   a version of the Abel-Jacobi map for families of cycles   on $X$ within  the same cohomology class.
More precisely, \emph{a family of codimension $i$ cycles} is a cycle $Z$ on $Y\times X$ such that 
 $Z$ meets the fibers $\set y \times X$ 
of the projection $Y\times X\to Y$ in a cycle $Z_y$ where $i=\codim Z_y$ is independent of $y$.
Supposing that $\cl{Z_y}$ is independent of $y$,   the class of the cycle  $Z_y-Z_{y_o}$ is homologous to $0$  and so  one has an Abel-Jacobi map   
\[
 \psi_Z^i: Y \to\ch i \hom X \mapright{\Psi^i_\aj}
  J^i(X) ,\quad \psi_Z^ i(y)= \psi^i_\aj(\alpha_y), \, \alpha_y= [Z_y- Z_{y_o}].
\]
This morphism  has two crucial properties~\cite[Lemma 4.7.1]{3authorsBIS}:

\begin{lemma} \label{lem:OnHs} Let  $Z=\sett{Z_y}{y\in Y}$ be a family of codimension $i$ cycles on $X$, all
in the same homology class.
\begin{enumerate}
\item The Abel-Jacobi map $\psi^i_\aj$ is \textbf{a regular homomorphism}, i.e. for all $Z$ as above, the map 
$\psi_Z^i$ is  holomorphic. 
\item Identifying the tangent space at $0\in J^i(X)$ with $H^{2i-1}/F^i=\overline {F^i }$ one has an inclusion
$
(\psi_Z^i)_* T_{y_o}Y \subset H^{i-1,i}\subset \overline {F^i }$.
\end{enumerate}
\end{lemma}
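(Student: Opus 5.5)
The plan is to reduce both statements to the classical description of $J^i(X)$ as a quotient of $H^{2i-1}(X)_\bC$, and to compute the derivative of $\psi^i_Z$ through the integral formula $f_\alpha(\omega)=\int_\gamma\omega$ recalled above. By the regular homomorphism property in (1), nothing is lost by working locally. Take a small disc $\Delta\subset Y$ around $y_o$, or more generally a contractible coordinate neighbourhood. After shrinking, the cycles $Z_y$ may be moved within their rational equivalence classes so that they form a continuous family of chains. The chain $\gamma_y$ with $\partial\gamma_y=Z_y-Z_{y_o}$ can then be chosen as $\Gamma_y:=Z|_{[y_o,y]}$, the restriction of the family $Z$ over a path from $y_o$ to $y$, pushed forward to $X$. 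This is a $(2d-2i+1)$-chain, because $Z_y$ has real dimension $2d-2i$ and the path adds one real dimension. We get
\[
\psi^i_Z(y)(\omega)=\int_{\Gamma_y}\omega=\int_{[y_o,y]}\pi_{Y*}\bigl(q_X^*\omega|_Z\bigr),
\]
where $\pi_Y$ and $q_X$ are the projections of $Z\subset Y\times X$ and $\omega$ is a closed form whose class lies in $F^{d+1-i}H^{2d-2i+1}(X)$.

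For (1), note that $\eta:=\pi_{Y*}(q_X^*\omega|_Z)$ is a closed current of degree $1$ on $Y$; it is closed because $\omega$ is closed and $Z\to Y$ is proper. Pushing forward fibre dimension $2d-2i$ preserves the Hodge filtration degree up to a shift by the relative complex dimension $d-i$. Hence $\eta$ lies in $F^{1}$, i.e.\ it is of type $(1,0)$, and it is holomorphic after modification by an exact term. I will choose the representative $\omega$ of pure type, in $\oplus_{p\ge d+1-i}A^{p,2d-2i+1-p}$, and argue that only its $(d-i+1,d-i)$ component contributes after pushforward. The map $y\mapsto\int_{y_o}^y\eta$ is therefore holomorphic in $y$. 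The lattice ambiguity comes from the choice of path, is locally constant, and so does not affect holomorphicity.

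For (2), differentiate at $y_o$. A tangent vector $v$ is sent to the functional $\omega\mapsto\eta(v)$. By the type computation this vanishes unless $\omega$ has a component of type $(d-i+1,d-i)$. So the image kills $F^{d+2-i}$ and factors through $F^{d+1-i}/F^{d+2-i}\cong H^{d+1-i,d-i}$. Under the Poincaré duality identification $(F^{d+1-i})^*\cong H^{2i-1}/F^i$, the functionals of this kind correspond exactly to $H^{i-1,i}$. This is the claimed inclusion.

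The main obstacle is the technical justification in (1): that the family can be moved so that the cycles vary smoothly, with $Z\to Y$ flat over an open dense set, and that pushforward of forms along $Z$ behaves well near singular or non-flat fibres. I would handle this by first resolving $Z$, replacing it by a desingularization $\tilde Z\to Z$. I would then treat $\eta$ as a current, and use that holomorphicity outside a proper analytic subset, together with continuity of $\psi^i_Z$, extends across that subset by the Riemann extension theorem.
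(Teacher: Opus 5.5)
The paper gives no proof of its own and only cites \cite[Lemma 4.7.1]{3authorsBIS}. Your argument is the standard Griffiths computation behind that result and is correct in outline: write $\psi^i_Z(y)(\omega)=\int_{\Gamma_y}\omega$ with $\omega$ represented in $F^{d+1-i}$, push it forward to a closed $(1,0)$-current on $Y$, and read off both holomorphicity and the vanishing on $F^{d+2-i}$ from types, which gives the image $F^{i-1}/F^i\cong H^{i-1,i}$. Two refinements: no moving of cycles is needed, and no ``modification by an exact term'' should be made, since that would change the map; a $d$-closed current of type $(1,0)$ is automatically $\bar\partial$-closed and hence already a holomorphic $1$-form on $Y$, which also makes the Riemann-extension/continuity step essentially unnecessary.
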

It follows from  Lemma~\ref{lem:OnHs}.(1) that the  image $J^i_\alg(X)$ of the Abel--Jacobi map is a complex torus. 
Although $J^i(X)$ is not  always an abelian variety, Lemma~\ref{lem:OnHs}.(2) and Observation~\ref{obs:JacIsAb}  imply that
$J^i_\alg(X)$ is an abelian variety.
Next, observe that equation~\eqref{eqn:CruxInc} implies
 that there is a surjective holomorphic map
 \begin{equation}
 \label{eqn:compare}
 \rho^i :  J_\alg ^i(X)=\ch i \alg X \cap  \ker (\Psi_{\rm AJ} )  \longtwoheadrightarrow  \pic^i (X):= \ch i \alg X/ K^i(X)
 \end{equation} 
defining a quotient torus $ \pic^i (X)$ which then also is an abelian variety:
\begin{corr}
$J^i_\alg(X)$ as well as $\pic^i(X)$ are abelian varieties.
\end{corr}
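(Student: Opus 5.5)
The plan is to prove the two assertions in order, since the second rests on the first.

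\emph{$J^i_\alg(X)$ is an abelian variety.} By Lemma~\ref{lem:OnHs}.(1), for every family $Z$ of codimension $i$ cycles the map $\psi^i_Z$ is holomorphic. Algebraically trivial cycles are differences of members of families parametrized by smooth connected projective curves (or varieties), and their images generate $J^i_\alg(X)$. So $J^i_\alg(X)$ is the subgroup generated by countably many images of irreducible projective varieties through the origin. A standard argument (sums of finitely many such images stabilize by dimension, and one can pass to a product of parameter varieties) shows it is a connected compact complex subtorus $T\subset J^i(X)$. Write $T=V_T/\Gamma_T$, where $V_T\subset H^{2i-1}(X)_\bC/F^i$ is spanned by the tangent images $(\psi^i_Z)_*T_{y_o}Y$.

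Lemma~\ref{lem:OnHs}.(2) gives $V_T\subset H^{i-1,i}$. Since $\Gamma_T=V_T\cap H^{2i-1}(X)$ spans $V_T$ over $\bR$, set $U=\Gamma_T\otimes\bQ\subset H^{2i-1}(X)_\bQ$. Then $U_\bC=V_T\oplus\overline{V_T}$, with components in $H^{i-1,i}$ and $H^{i,i-1}$. The step I expect to need care is that $U$ is a sub-Hodge structure. This holds because $U_\bR$ is the real span of $V_T$, which is stable under $C$. Thus $U$ has level $\le 1$ and $T=J(U)$. By Observation~\ref{obs:JacIsAb}, $J(U)$ coincides with Weil's jacobian $J_W(U)$. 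Restricting the Hodge--Riemann form of Theorem~\ref{thm:HR}, through the Lefschetz decomposition as in Weil, to the sub-Hodge structure $U$ then gives a polarization; here I use that Weil's polarizing form on $J^i(X)_W$ restricts to a nondegenerate positive form on sub-Hodge structures. Hence $T$ is an abelian variety. Alternatively, one can use that a complex subtorus of Weil's abelian variety $J^i(X)_W$ is abelian, and on $U$ the complex structures agree.

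\emph{$\pic^i(X)$ is an abelian variety.} I use the surjection $\rho^i$ of \eqref{eqn:compare}. First I will make precise that $K^i(X)$ maps to a closed subgroup of $J^i_\alg(X)$. Its image is $\bigcap_f\ker(f_*:J^i_\alg(X)\to\pic^0(Y))$, which is closed, and the inclusion \eqref{eqn:CruxInc} says $\pic^i(X)$ is the quotient of $J^i_\alg(X)$ by this closed subgroup. A quotient of an abelian variety by a closed subgroup is the quotient by a finite extension of an abelian subvariety, so it is again an abelian variety, for instance by Poincaré reducibility. The only point to check is that the intersection of kernels is an algebraic subgroup. This follows because each $f_*$ is a homomorphism of abelian varieties, and a descending chain of algebraic subgroups stabilizes.
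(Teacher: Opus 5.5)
Your proposal is correct and follows the paper's route. Lemma~\ref{lem:OnHs}.(1) makes $J^i_\alg(X)$ a complex subtorus, Lemma~\ref{lem:OnHs}.(2) puts its tangent space in $H^{i-1,i}$ so that Observation~\ref{obs:JacIsAb} (Weil's polarization) makes it abelian, and $\pic^i(X)$ is then the quotient via $\rho^i$. You also supply two details the paper leaves implicit: that $U_\bC=V_T\oplus\overline{V_T}$ is a sub-Hodge structure of level $\le 1$ (it is this decomposition, not $C$-stability alone, that gives this), and that the image of $K^i(X)$ is a Zariski-closed subgroup, so the quotient is indeed an abelian variety.
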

It follows from this that
 \begin{equation}
  \label{eqn:OnGriffConj}
   GC(X,i)   \iff   J_\alg ^i(X) \text{ is isogenous to }  \pic^i (X). 
 \end{equation} 
The notation $\pic^i (-)$, as well as the  terminology  $i$-th \emph{higher Picard variety}  for these,  is motivated by the fact that
 $\pic ^1(X)=\pic^{(0)}(X)$,  the Picard variety of $X$ and $\pic^d (X)$ is the Albanese variety (see Example~\ref{exm:basic}).
 
 \begin{rmk}
 In  Example~\ref{exm:basic}    $\pic^{(0)} (X)$ and $\pic^d(X)$ appeared in connection with  \cite{liebIJ,hsaito}.
 In fact    higher Picard  varieties have  been investigated  in characteristic 0 in   \cite{liebIJ} and 
 in any characteristic in \cite{hsaito}.  
 \end{rmk}

By   \cite[\S 4]{liebIJ}   one has a functoriality property as in equation~\ref{eqn:CorOnJacs}:

\begin{lemma} Let $Y$ be a smooth projective variety and  
$f\in \cors {j-i}  {} {X ,Y} $ an integral correspondence. Then 
there exists precisely one abelian variety homomorphism $f_{\picmap} :\pic^ i(X)  \to \pic^j(Y)$ such that
the diagram
\[
\xymatrix{ 
\ch {i} \alg X  \ar[r]^{f_\ast }                  \ar[d]                      & \ch j \alg Y           \ar[d] \\
                  \pic^{i}(X) \ar[r]_{f_{\picmap} }                        & \pic^j(Y)
}
\]
commutes.
\end{lemma}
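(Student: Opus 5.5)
Here is the plan. We show that $f_*$ carries $K^i(X)$ into $K^j(Y)$, so that it descends to a group homomorphism between the quotients. We then argue that this descended map is a morphism of abelian varieties, and finally that it is unique.

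\emph{Step 1: $f_*$ preserves incidence-trivial classes.} Let $\alpha\in K^i(X)$, and let $g\in\cors{1-j}{}{Y,Z}$ be any integral correspondence to a smooth projective $Z$. By the composition property of correspondences (Properties in \S\ref{sec:cors}), $g\comp f\in\cors{1-i}{}{X,Z}$ is integral, and $g_*(f_*\alpha)=(g\comp f)_*\alpha$. The right-hand side vanishes because $\alpha\sim_\inc 0$. Since $g$ was arbitrary, $f_*\alpha\in K^j(Y)$. The compatibility $(g\comp f)_*=g_*\comp f_*$ on Chow groups is the standard associativity of correspondence actions (projection formula), which I take as known. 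We also have $f_*(\ch i\alg X)\subset\ch j\alg Y$, because correspondences preserve algebraic equivalence. Consequently $f_*$ induces a group homomorphism $f_\picmap:\pic^i(X)\to\pic^j(Y)$ making the square commute.

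\emph{Step 2: $f_\picmap$ is holomorphic.} This is the main obstacle, since $\pic^i(X)$ is defined only as the quotient torus of $J^i_\alg(X)$ via $\rho^i$ in \eqref{eqn:compare}. I would use diagram \eqref{eqn:CorOnJacs}, which gives a homomorphism of complex tori $f_*:J^i_\alg(X)\to J^j_\alg(Y)$ compatible with the Abel--Jacobi maps. Composing with the surjections $\rho^i$ and $\rho^j$, we get
\[
\rho^j\comp f_*\comp\Psi^i_\aj=\rho^j\comp\Psi^j_\aj\comp f_*=f_\picmap\comp\rho^i\comp\Psi^i_\aj .
\]
Since $\Psi^i_\aj$ maps $\ch i\alg X$ onto $J^i_\alg(X)$, this yields $\rho^j\comp f_*=f_\picmap\comp\rho^i$. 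The left-hand side is a holomorphic homomorphism that kills $\ker\rho^i$, so it factors through the quotient torus $\pic^i(X)=J^i_\alg(X)/\ker\rho^i$ as a holomorphic homomorphism. That factorization is exactly $f_\picmap$. A holomorphic homomorphism of abelian varieties is algebraic, by GAGA or the description of torus maps in \S\ref{sec:AJ}. This gives existence.

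\emph{Step 3: uniqueness.} The vertical map $\ch i\alg X\to\pic^i(X)$ is surjective by definition of $\pic^i(X)$. Any two homomorphisms making the diagram commute therefore agree on every point of $\pic^i(X)$, so they are equal.

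The only delicate point is Step 2. There one must make sure that $\pic^i(X)$ carries the complex structure inherited from $J^i_\alg(X)$, i.e. that $\ker\rho^i$ is a closed subgroup, so that the quotient is a torus. This is implicit in the statement that $\pic^i(X)$ is an abelian variety, which we take from the preceding Corollary. If one preferred to avoid this, one could instead follow Lieberman and use regularity of $f_*$ on families of cycles (Lemma~\ref{lem:OnHs}) to obtain holomorphy directly.
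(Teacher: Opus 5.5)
Your proof is correct, and it takes a different route from the paper, which gives no argument of its own and only refers to \cite[\S 4]{liebIJ}, with \cite{hsaito} for the algebraic theory. You instead derive the lemma from ingredients the paper has already set up:
\begin{itemize}
\item compatibility of composition of integral correspondences with their action on Chow groups gives $f_*K^i(X)\subset K^j(Y)$;
\item the transcendental functoriality \eqref{eqn:CorOnJacs} gives a holomorphic homomorphism $J^i_\alg(X)\to J^j_\alg(Y)$;
\item passing through the quotient maps $\rho^i,\rho^j$ of \eqref{eqn:compare} shows that $f_\picmap$ is holomorphic, hence algebraic;
\item uniqueness is immediate from surjectivity of $\ch{i}{\alg}{X}\to\pic^i(X)$.
\end{itemize}

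Your argument is short and self-contained. Its cost is that it is tied to $\bC$ and to $\pic^i(X)$ carrying the quotient complex structure from $J^i_\alg(X)$. The algebraic theory of \cite{hsaito}, by contrast, works in any characteristic.

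The point you flag as delicate can be settled inside your own framework, without taking the Corollary on faith. By \eqref{eqn:CorOnJacs} and the isomorphism $\ch{1}{\alg}{Z}\cong\pic^0(Z)$ one has
\[
\ker\rho^i=\bigcap_{g}\ker\bigl(g_*: J^i_\alg(X)\to\pic^0(Z)\bigr),
\]
where the intersection runs over all smooth projective $Z$ and all integral $g\in\cors{1-i}{}{X,Z}$. This is an intersection of Zariski-closed subgroups of the abelian variety $J^i_\alg(X)$, so it is closed. Hence $\pic^i(X)$ is an abelian variety and $\rho^i$ is a quotient morphism.
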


While for $J^i_\alg(X)$ duality is not clear for arbitrary $i$, for $\pic^i(X)$ this holds due to an argument of Grothendieck  which is 
explained in \cite[\S 4]{liebIJ}. This  uses a so-called 
\textbf{\emph{Poincaré $i$-correspondence}}\footnote{Also called a Poincaré $i$-cycle (class).}
 $\pi^i$, a correspondence  of degree   $i-e$, $e=\dim {\pic^i (X)}$ 
  from $\pic ^i(X)$ to $X$ such that 
 the induced endomorphism $f_{\picmap}$  of
 $\pic^i(X)$ (see the preceding
 diagram for $i=e $) is multiplication by some integer depending on $(X,i)$.  The transpose of the Poincaré $i$-correspondence is a correspondence
 for $X$ to $\pic^i(X)$ of degree $i-d$. It sends $\ch {d+1-i} \alg  X$ to $\ch {1}\alg X$ and setting $j=d+1-i$,
 it induces a morphism
 $$
(\pi^i)^*= \lambda^j_X: \pic^j (X)\to \pic^1(\pic^i (X))= \pic^{(0)}(\pic^i (X))= (\pic^i(X))^*.
 $$
This gives the announced duality statement:
    
 \begin{thm}[\protect{\cite[\S 4]{liebIJ} and \cite[Theorem (3.6)]{hsaito}}] 
 \label{thm:OnPic} Let $X$ be a $d$-dimensional smooth projective variety.   Let $i,j$ non-negative integers with $i+j=d+1$.   
 Then   
 $$
 \lambda^j_X: \pic^j (X) \to \pic^{i} (X)^*
 $$ 
 is an isogeny which does not depend  of the choice of the Poincaré $i$-correspondence.
  \end{thm}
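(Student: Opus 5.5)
We need to prove that $\lambda^j_X$ is an isogeny, and that it does not depend on the choice of the Poincaré $i$-correspondence. The plan is to compare dimensions and kernels using the defining property of the Poincaré correspondences, working throughout at the level of the abelian varieties $\pic^i(X)$ and $\pic^j(X)$ and the functoriality lemma for $f_{\picmap}$.

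\emph{Step 1: injectivity up to finite kernel.} Fix a Poincaré $i$-correspondence $\pi^i$ from $P:=\pic^i(X)$ to $X$. Let $\beta\in\ch j \alg X$ with $(\pi^i)^*\beta=0$ in $\pic^{(0)}(P)$, so $\beta$ has trivial incidence divisor for this one family. We must show that the class of $\beta$ is torsion in $\pic^j(X)$, i.e. $f_*\beta\sim_{\rm rat} 0$ up to torsion for every integral correspondence $f\in\cors{1-j}{}{X,Y}$.

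The key point is that any family of codimension-$i$ cycles on $X$ parametrized by $Y$ factors, after multiplication by the integer $N=N(X,i)$, through $P$. Indeed, $f$ transposed gives a map $Y\to \ch i \alg X\to P$, which is a morphism of abelian-variety type by the universal property. Composing with $\pi^i$ then recovers $N\cdot{}^tf$ modulo cycles incident-equivalent to zero. Consequently $N f_*\beta = g^*((\pi^i)^*\beta)=0$, where $g:\alb(Y)\to P$ is the induced homomorphism. So $\ker\lambda^j_X$ is killed by $N$, hence finite.

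The delicate point, which I expect to be the main obstacle, is justifying this factorization. The relation between ${}^tf$ and $\pi^i\circ g$ holds only modulo $K^i$-type equivalence on the $X$-side, and we must check that this discrepancy acts trivially on $\pic^j$. I would handle it by using the symmetry of incidence: $\langle \alpha,\beta\rangle$ defined via incidence divisors depends only on the classes in $\pic^i(X)\times\pic^j(X)$. This symmetry follows from rewriting $f_*\beta$ as an intersection on $X\times Y$ and applying projection formulas.

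\emph{Step 2: dimensions.} By the same argument with the roles of $i$ and $j$ swapped, we get $\lambda^i_X:\pic^i(X)\to\pic^j(X)^*$ with finite kernel. Hence $\dim\pic^j(X)\le\dim\pic^i(X)$ and conversely, so the dimensions agree. A homomorphism of abelian varieties of equal dimension with finite kernel is surjective, hence $\lambda^j_X$ is an isogeny.

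\emph{Step 3: independence of the choice.} If $\pi'$ is another Poincaré correspondence with integer $N'$, then
$$N'\,(\pi^i)^*\beta \quad\text{and}\quad N\,(\pi')^*\beta$$
both equal the "universal incidence pairing" of $\beta$ against $P$, by the factorization of Step 1 applied to $\pi'$ as a family over $P$. Normalizing by these integers (working in $\operatorname{Hom}\otimes\bQ$), the maps coincide. I would state independence in this sense, as homomorphisms up to the fixed normalizing integer.
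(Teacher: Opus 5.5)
\textbf{Verdict: gap at the step you flagged.} The paper gives no proof of this theorem; it cites Lieberman and Saito. Your plan is the standard Grothendieck argument used there. You bound $\ker\lambda^j_X$ by factoring an arbitrary family through $\pi^i$, do the same for $\lambda^i_X$, compare dimensions, and get independence by feeding a second Poincaré correspondence into the factorization. Steps 2 and 3 are fine once Step 1 is established.

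In Step 3 you are right that literal independence is false, since $2\pi^i$ is again a Poincaré correspondence. Only $\frac1N\lambda^j_X\in\Hom(\pic^j(X),\pic^i(X)^*)\otimes\bQ$ can be independent of the choice.

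Step 1 needs the following lemma. If $h\in\ch{i}{}{Y\times X}$ satisfies $h_y-h_{y_0}\sim_{\inc}0$ for all $y$, then ${}^th_*\beta=0$ in $\pic^{(0)}(Y)$ for every $\beta\in\ch{j}{\alg}{X}$. You would apply it to $h=N\,{}^tf-(\phi\times 1)^*\pi^i$.

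Your justification does not prove this lemma. The pairing $\langle\alpha,\beta\rangle$ on $\pic^i(X)\times\pic^j(X)$ is never defined, and it cannot be a nontrivial scalar-valued algebraic pairing: a morphism from a product of abelian varieties to $\bC^\times$ is constant. The duality lives in a Poincaré bundle, not in a bilinear form. Projection formulas only show ${}^t({}^th\circ b)={}^tb\circ h$, i.e.\ that the two incidence maps are the two sides of one divisor class. They do not show that triviality on one side forces triviality on the other.

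The missing input is the seesaw principle, i.e.\ Weil's theorem that a divisor on $T\times Y$ induces homomorphisms $\alb(T)\to\pic^{(0)}(Y)$ and $\alb(Y)\to\pic^{(0)}(T)$ that are transposes of each other. You also need to represent $\beta$ by a family.

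\begin{itemize}
\item Since $\beta$ is algebraically trivial, write $\beta=b_{t_1}-b_{t_0}$ with $b\in\ch{j}{}{T\times X}$ and $T$ a smooth projective curve.
\item Put $\Gamma={}^th\circ b\in\ch{1}{}{T\times Y}$. Then $\Gamma|_{T\times y}={}^tb_*(h_y)$ and $\Gamma|_{t\times Y}={}^th_*(b_t)$.
\item Because ${}^tb$ is a correspondence of degree $1-i$ from $X$ to $T$, the hypothesis on $h$ gives $\Gamma|_{T\times y}\simeq\Gamma|_{T\times y_0}$ for all $y$.
\item Seesaw then gives $\Gamma\simeq p_T^*A\otimes p_Y^*M$. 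Hence $\Gamma|_{t\times Y}\simeq M$ for all $t$, and ${}^th_*\beta=\Gamma_*(t_1-t_0)=0$.
\end{itemize}

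Both ingredients are needed. The hypothesis on $h$ concerns correspondences out of $X$ applied to codimension-$i$ cycles. Only a divisor on a product of two parameter spaces, together with seesaw, transfers it to the $\beta$-side.

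With this lemma your identity $Nf_*\beta=\phi^*\lambda^j_X(\beta)$ holds, so $\ker\lambda^j_X\subset\pic^j(X)[N]$ is finite. Your Step 3 relation $N\lambda_{\pi'}=N'\lambda_{\pi}$ then follows by taking $Y=P$ with $\phi$ equal to multiplication by $N'$, which acts as multiplication by $N'$ on $\pic^{(0)}(P)$.
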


\begin{corr} \label{cor:OmDualPics}
Suppose that $GC(X,i)$ holds, then if $J^ i  _\alg (X)$ is dual to $J ^j_\alg (X)$  also $GC(X,j)$ holds.
\end{corr}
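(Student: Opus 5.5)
The plan is to use the reformulation \eqref{eqn:OnGriffConj}: $GC(X,k)$ holds exactly when the surjection $\rho^k\colon J^k_\alg(X)\to\pic^k(X)$ of \eqref{eqn:compare} is an isogeny. Since $\rho^k$ is surjective, this is the same as $\dim J^k_\alg(X)=\dim \pic^k(X)$; equivalently, the kernel of $\rho^k$ is finite. So it suffices to compare dimensions.

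The first step is to record what $GC(X,i)$ gives. It says $\rho^i$ is an isogeny, so
\[
\dim J^i_\alg(X)=\dim\pic^i(X).
\]

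The second step is to transfer this to index $j=d+1-i$ using duality on both sides. By hypothesis $J^j_\alg(X)$ is dual to $J^i_\alg(X)$; a dual abelian variety has the same dimension, so $\dim J^j_\alg(X)=\dim J^i_\alg(X)$. By Theorem~\ref{thm:OnPic}, $\lambda^j_X\colon\pic^j(X)\to\pic^i(X)^*$ is an isogeny, so $\dim\pic^j(X)=\dim\pic^i(X)^*=\dim\pic^i(X)$. Chaining these equalities gives
\[
\dim J^j_\alg(X)=\dim J^i_\alg(X)=\dim\pic^i(X)=\dim\pic^j(X).
\]

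The last step is to conclude. Now $\rho^j\colon J^j_\alg(X)\to\pic^j(X)$ is a surjective homomorphism of abelian varieties of equal dimension. Its kernel is therefore a closed subgroup of dimension zero, hence finite, so $\rho^j$ is an isogeny, and \eqref{eqn:OnGriffConj} gives $GC(X,j)$.

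The only point that needs care is the meaning of ``dual'' in the hypothesis. If it means an isogeny (or isomorphism) $J^j_\alg(X)\simeq J^i_\alg(X)^*$, then the dimension count above is all that is used. I would state explicitly that only this dimension consequence of the duality is needed, so that no compatibility with the Poincaré correspondence or with $\lambda^j_X$ has to be checked. The same argument also runs with $i$ and $j$ interchanged.
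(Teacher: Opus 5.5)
Your proposal is correct and follows essentially the route the paper intends. The paper combines \eqref{eqn:OnGriffConj} for index $i$, the hypothesis that $J^j_\alg(X)$ is dual to $J^i_\alg(X)$, and the isogeny $\lambda^j_X\colon \pic^j(X)\to\pic^i(X)^*$ of Theorem~\ref{thm:OnPic} to conclude that $J^j_\alg(X)$ is isogenous to $\pic^j(X)$. Your reduction to a dimension count via the surjectivity of $\rho^j$ is a harmless streamlining: it avoids dualizing and composing isogenies, and it makes explicit why an abstract isogeny forces $\rho^j$ itself to have finite kernel.
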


\section{Relation with the  generalized Hodge conjecture}
\label{sec:GHC}

The coniveau filtration on the cohomology $H^\bullet(X)_\bQ$, $X$ smooth projective,  is defined as follows
\[
N^iH^m (X)= \bigcup_{\codim Z\ge i} \im( H^m_Z(X)\otimes\bQ \to H^m(X)_\bQ),\quad Z\subset X \text{ a subvariety,}
\]
and where
$ 
H^m_Z(X):= H^m(X,X\setminus Z). 
$ 
If $Z$ is smooth, by Poincar\'e--Lefschetz duality  one has an isomorphism
$$
H^m_Z(X)_\bQ\simeq H^{m-2i}(Z)_\bQ.
$$
On the Hodge theoretic side,  $H^{m-2i}(Z)$ has a pure weight $m-2i$ Hodge structure, and so has  level 
at most $m-2i$.\footnote{Recall ll that  the level of a Hodge structure equals  $ \max_{H^{p,q}\not=0} |p-q|$.}
   In case $Z$ is smooth of codimension $i$,  one has
 $$
 \im( H^m_Z(X)\otimes\bQ \to H^m(X)_\bQ)= \im( H^{m-2i}(Z)(-i)\otimes\bQ  \mapright{\iota_*} F^i H^m_\bC\cap H^m(X)_\bQ )
 $$ 
 with $\iota:Z\into X$ the embedding. Note there that for the Gysin map $\iota_*$ a Tate twist has to be applied in order to make it a
 morphism preserving weights. It follows that
   the image of $\iota_*$  also has level   $\le  |m-2i|$. In case $Z$ is singular, one first   forms the  desingularization $\tilde Z$,
and the crucial non-trivial remark here is that the  resulting Gysin map  $H^{m-2i}(\tilde Z)(-i) \to  H^m(X)$  
has the same image as $\iota_*$. This follows from mixed Hodge structures. See e.g. \cite[Lemma 7.2]{mht}.

Summarizing, since $N^iH^m (X) \subset F^i H^m_\bC (X)\cap H^m_\bQ (X)$, the subspace $ N^iH^m (X) $ is a 
$\bQ$-Hodge   structure of level $\le |m-2i|$.  
However, the right hand side $F^i H^m_\bC (X)\cap H^m_\bQ (X)$ is not in general a 
Hodge structure  as explained in \cite{groth} by Grothendieck. One should replace it by
 \begin{equation}
 \label{eqn:GrothHhdg}
H^{m,i}_\hdg(X) =\text{largest Hodge substructure of } F^i H^m_\bC (X)\cap H^m_\bQ (X), 
 \end{equation}  
 which is  the  largest Hodge substructure of  $H^m(X)_\bQ$ of level $\le |m-2i|$. This leads to

\begin{conj}\label{conj:GHC}
GHC$(X,m,i)$: For a smooth complex projective variety $X$ one has  $\ H^{m,i}_\hdg(X) =N^i H^m (X) $.
\end{conj}

 While the classical Hodge conjecture states that $H^{2i}_\alg(X)=H^{2i}_\hdg(X)$,  the conjectures GHC$(X,m,i)$ involve 
 the entire cohomology, not just the even degree part.  
 For odd $m=2i-1$  in the remaining part of this note a special role is played by
 \begin{equation}
H^{2i-1}_\alg(X) := N^{i}H^{2i-1}(X),\quad    H^{2i-1}_\alg(X)_\bC = N^{i}H^{2i-1}(X)\otimes\bC  \label{eqn:algodd}.
\end{equation}
The geometric nature of the coniveau filtration implies that it is compatible with the action of the Lefschetz operator.
In the present situation it gives a homomorphism $L: H^{2i-1}_\alg(X) \to H^{2i+1}_\alg(X) $.
This leads to  the following result.

\begin{corr}  \label{cor:OnConjB} Suppose that  $B(X)$ holds. If $x\in H^{2i-1}_\alg(X) $, 
  then all primitive components of the Lefschetz decomposition belong to $H^{\bullet}_\alg(X)$.
  \end{corr}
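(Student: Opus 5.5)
The plan is to copy the proof of Lemma~\ref{lem:OnConjBeven}, replacing $H^{2i}_\alg$ by the odd-degree coniveau pieces $H^{2i-1}_\alg(X)=N^iH^{2i-1}(X)$ from~\eqref{eqn:algodd}. Let $x\in H^{2i-1}_\alg(X)$ have primitive decomposition $x=\sum_a L^a x_{2i-1-2a}$. By~\eqref{eqn:OnLambda} each primitive component is $x_{2i-1-2a}=\Phi_{2i-1,a}(L,\Lambda)x$, where $\Phi_{2i-1,a}$ is a noncommutative polynomial in $L$ and $\Lambda$. Under $B(X)$ both $L$ and $\Lambda$ are induced by self-correspondences of $X$, of degrees $1$ and $-1$. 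Since correspondences compose (Properties in \S~\ref{sec:cors}), $\Phi_{2i-1,a}(L,\Lambda)$ is induced by a correspondence of degree $-a$. The statement therefore reduces to one claim: a correspondence $f$ of degree $r$ on $X$ maps $N^kH^m(X)$ into $N^{k+r}H^{m+2r}(X)$. Applying this with $k=i$, $m=2i-1$, $r=-a$ gives $x_{2i-1-2a}\in N^{i-a}H^{2i-1-2a}(X)=H^{2i-1-2a}_\alg(X)$, which is what the corollary asserts.

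To prove the claim, represent $f$ by a cycle $\Gamma=\sum n_\nu \Gamma_\nu$ on $X\times X$ with each $\Gamma_\nu$ irreducible of dimension $d-r$. Take a class $y$ in the image of $H^m_Z(X)\to H^m(X)$, with $\codim Z\ge k$. Using the remark after the definition of the coniveau filtration, $y=\iota_*z$ for some $z\in H^{m-2k}(\tilde Z)$, where $\iota:\tilde Z\to X$ factors through a desingularization of $Z$. Then $p^*y\cdot\Gamma$ is supported on $p^{-1}(Z)\cap\Gamma$. Its pushforward $q_*(p^*y\cdot\Gamma)$ is therefore supported on $W=q(p^{-1}(Z)\cap|\Gamma|)$.

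The main step, and the one I expect to be hardest, is to show that the class is supported in codimension at least $k+r$ \emph{in the refined sense}: it must come from $H^{m+2r}_W(X)$, not merely vanish off $W$. I would handle it in three moves.

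1. Replace each $\Gamma_\nu$ by a resolution $\tilde\Gamma_\nu\to X\times X$. Write the action of $\Gamma_\nu$ as $q_{\nu*}p_\nu^*$ with $p_\nu,q_\nu:\tilde\Gamma_\nu\to X$, using the standard fact that the class of $\Gamma_\nu$ is the pushforward of the fundamental class of $\tilde\Gamma_\nu$.
2. Apply a local cohomology refinement of pullback to get $p_\nu^*y\in \im\bigl(H^m_{p_\nu^{-1}Z}(\tilde\Gamma_\nu)\bigr)$. Since $\dim p_\nu^{-1}Z\le \dim Z+\dim(\text{fibres})$, its codimension in $\tilde\Gamma_\nu$ is at least $k$ when $p_\nu$ is flat-like. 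In general one should instead take $Z$ in general position, using a moving lemma on $X$ or the observation that $N^k$ is generated by images of $H^{m-2k}(\tilde Z)$ for $Z$ chosen generically.
3. Proper pushforward sends $H_{S}(\tilde\Gamma_\nu)$ into $H_{q_\nu(S)}(X)$, shifting degree by $2r$. Since $\dim q_\nu(S)\le\dim S\le d-r-k$, the subvariety $q_\nu(S)$ has codimension at least $k+r$ in $X$.

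If controlling codimension under pullback becomes delicate, a fallback is to work directly with the description $N^kH^m=\sum\im(\iota_*:H^{m-2k}(\tilde Z)\to H^m(X))$. For the composite $\Gamma\circ\iota_*$, which is itself a correspondence from $\tilde Z$ to $X$, I would use Jannsen's standard argument: via the Künneth decomposition the pushforward factors through the Gysin image of a subvariety of dimension at most $\dim Z-r$ in $X$. The resulting inclusion $f_*N^k\subset N^{k+r}$ is standard, but it is the only nonformal input; everything else is the formal algebra of~\eqref{eqn:OnLambda} already used for Lemma~\ref{lem:OnConjBeven}.
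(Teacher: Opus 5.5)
Your proposal follows the paper's proof: the paper also reduces, via the polynomials $\Phi_{2i-1,a}(L,\Lambda)$ of \eqref{eqn:OnLambda}, to the fact that a degree $r$ correspondence maps $N^kH^m(X)$ into $N^{k+r}H^{m+2r}(X)$, which it simply asserts without proof. Of your two justifications of that fact, rely on the fallback: the composite $\Gamma\circ\iota_*$ is a class in $\ch{d+r}{}{\tilde Z\times X}$, hence a cycle of dimension $\dim Z-r$, and the pushforward factors through the Gysin image of its projection to $X$ (K\"unneth is not needed). The first route does not work as written, because you cannot move $Z$ while keeping $y$ supported on it --- you would instead have to move $\Gamma$ to meet $Z\times X$ properly, and bound $\dim q_\nu(S)$ by $\dim(|\Gamma_\nu|\cap(Z\times X))$ rather than by $\dim S$.
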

  \begin{proof}  As remarked above, for even cohomology this is   Lemma~\ref{lem:OnConjBeven}. 
   Conjecture $B(X)$ states that $\Lambda$ is induced by a correspondence of degree $-1$.   Just as $L$, any correspondence 
   acts on the coniveau filtration. More precisely, a correspondence of rank $r$ from $X$ to $X$ sends 
   $N^i H^n(X)_\bQ$ to $N^{i+ r}H^{n+2r}(X)_\bQ$ and so in particular  it sends $H^{2i-1}_\alg(X)$ to $H^{ 2(i+r)}_\alg(X)$.
  This implies that the  argument used in  the proof of    Lemma~\ref{lem:OnConjBeven}  applies here.
   \end{proof}
 
There is also a relation of the coniveau filtration with  the abelian variety  $J^i_\alg(X)$.  To explain this,
 recall that for  any complex torus $J=V/\Gamma$ the  
 tangent space $T(J)$ at the identity is equal to $V$.  Applying this to  $  J^i(X)$, we get 
  $ T(J^i(X))= H^{2i-1}(X)/F^i=\bar F^{2i -1-i+1}=\bar F^i$.        By Lemma~\ref{lem:OnHs}.(2)
  $ T(J^i(X)) \oplus \overline{T(J^i(X))} \subset F^i\oplus \bar F^i \subset H^{2i-1}(X)$.
  Then $ T(J_\alg^i(X)) \oplus \overline{T(J_\alg^i(X))} $   is a sub Hodge structure, since $T(J_\alg^i(X))$
  is of type $(i-1,i) $, and so the Hodge structure  on $T(J_\alg^i(X))\oplus \overline{T(J_\alg^i(X))}$
  is of level $\le 1$  and   in fact it is the maximal such Hodge structure as shown by Murre: 
 
  \begin{lemma}[\protect{\cite[Lemma 4.3]{murre}}] $H^{2i-1}_\alg (X)_\bC   =T(J_\alg^i(X)) \oplus \overline{T(J_\alg^i(X))}$ and hence
  $$
  J^i_\alg(X)= H^{2i-1}_\alg(X)/ [F^i(H^{2i-1}_\alg(X))+  H^{2i-1}(X)  \cap  H^{2i-1}_\alg(X)  ].
  $$
  \end{lemma}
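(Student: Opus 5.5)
Write $V:=T(J^i_\alg(X))\subset \overline{F^i}=H^{i-1,i}\oplus H^{i-2,i+1}\oplus\cdots$ for the tangent space of $J^i_\alg(X)$ at the origin. By Lemma~\ref{lem:OnHs}.(2), $V\subset H^{i-1,i}(X)$. The plan is to prove the two inclusions
$$
V\oplus\overline V\subseteq H^{2i-1}_\alg(X)_\bC
\quad\text{and}\quad
H^{2i-1}_\alg(X)_\bC\subseteq V\oplus\overline V
$$
separately. The formula for $J^i_\alg(X)$ then follows formally: the lattice of the subtorus $J^i_\alg(X)\subset J^i(X)$ is the lattice of $J^i(X)$ meeting the rational span of $V\oplus \overline V$.

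\emph{First inclusion.} Since $J^i_\alg(X)$ is the image of $\ch i\alg X$, and algebraically trivial cycles are differences of fibres of families parametrized by curves, it is swept out by images $\psi^i_Z(C)$. Here $C$ is a smooth projective curve and $Z\subset C\times X$ is a family of codimension $i$ cycles. Finitely many such images generate $J^i_\alg(X)$ as a group, and after taking a product of the curves and summing the families we may take a single family. The map $\psi^i_Z$ induces a homomorphism $\alb(C)=J^1(C)\to J^i(X)$. On tangent spaces this is the correspondence action $Z_*:H^1(C)\to H^{2i-1}(X)$ (the Gysin map for $Z$ viewed as a correspondence of degree $i-1$), restricted to $H^{0,1}$. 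The image of $Z_*$ on $H^1(C)_\bQ$ lies in $N^{i-1}$ a priori. It is supported on $\operatorname{pr}_X(|Z|)$, which has codimension $\ge i-1$, so this alone is not enough. Since $Z_*$ factors through $H^1_{|Z|}$ and $|Z|\to C$ has fibres of codimension $i$ in $X$, one instead uses a desingularization $\tilde Z$ of $|Z|$ (of dimension $d-i+1$). Then $Z_*(H^1(C))$ is contained in $\im\big(H^1(\tilde Z)\to H^{2i-1}(X)\big)$, i.e.\ in $\im(H^{2i-1}_{W}(X))$ with $W=\operatorname{pr}_X|Z|$ of codimension $i-1$, and this is still not sufficient. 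The correct route is: the classes in $Z_*H^1(C)$ are represented by chains that are boundaries $\gamma$ swept by the cycles $Z_c$. These are supported on $W$, but more precisely $Z_*H^1(C)=\iota_*\big(H^1(\tilde Z)\big)$, and $\tilde Z$ has codimension $i-1$. So the first inclusion should follow by the dimension count $N^{i-1}$ combined with weight/type reasoning. This is the step I expect to be the main obstacle. The fix I would try first is the standard argument: if $Z_*H^1(C)$ is spanned by a level $\le 1$ Hodge structure of Hodge type $(i,i-1)+(i-1,i)$, then it lies in $N^{i}$. This requires showing that a sub-Hodge structure of $\iota_*H^1(\tilde Z)(-i+1)$ of this type comes from a divisor in $\tilde Z$, using Lefschetz $(1,1)$ on $\tilde Z\times$ (dual abelian variety) to produce the needed algebraic correspondence.

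\emph{Second inclusion.} Take $Z\subset X$ of codimension $i$ with desingularization $\iota:\tilde Z\to X$. By the mixed Hodge remark recalled above, $\im(H^{2i-1}_Z)=\iota_*H^1(\tilde Z)$. Now $H^1(\tilde Z)$ corresponds to $\pic^{(0)}(\tilde Z)$, which is parametrized algebraically by divisors on $\tilde Z$. Pushing forward a family of degree $0$ divisors on $\tilde Z$ gives a family of codimension $i$ cycles on $X$, algebraically trivial. Its Abel--Jacobi map $\pic^{(0)}(\tilde Z)\to J^i(X)$ is, by functoriality \eqref{eqn:CorOnJacs} for the graph correspondence of $\iota$, the map induced by $\iota_*$ on $J^1(\tilde Z)$. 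Its tangent image is $\iota_*H^{0,1}(\tilde Z)\subset V$ (up to conjugation conventions). Taking real spans gives $\iota_*H^1(\tilde Z)_\bC\subset V\oplus\overline V$. Summing over all $Z$ yields the second inclusion, which is comparatively routine.
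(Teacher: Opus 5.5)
There is a genuine gap. Its source is an off-by-one in the coniveau index, inherited from the survey's definition \eqref{eqn:algodd}, rather than a missing idea. Read literally, $N^iH^{2i-1}(X)=0$. Indeed, for $W\subset X$ closed of codimension $\ge i$ one has $H^{2i-1}_W(X)=0$, because $H^m_W(X)$ vanishes for $m<2\codim W$; equivalently, the Gysin source would be $H^{-1}(\tilde W)$. So the lemma with $N^i$ is false. The ``fix'' you propose for the first inclusion therefore cannot succeed: no nonzero class lies in $N^iH^{2i-1}(X)$. Moreover, a divisor $D\subset\tilde Z$ has codimension $i$ in $X$, so it contributes to $H^{2i-1}(X)$ only through $H^{-1}(\tilde D)=0$. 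The intended space is $N^{i-1}H^{2i-1}(X)$, the classes supported in codimension $i-1$. This is the only reading under which the level-$\le 1$ claims hold (level $\le (2i-1)-2(i-1)=1$). It is also the reading under which $H^1_{\prim,\alg}(X)=H^1(X)_\bQ$, as the survey uses later when discussing Murre's proof of $GC(X,2)$.

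With this reading, the first inclusion is exactly the observation you discarded. Write $Z=\sum n_kZ_k$, and let $a_k\colon\tilde Z_k\to C$, $b_k\colon\tilde Z_k\to X$ come from the desingularized components, which have dimension $d-i+1$. Then $Z_*=\sum n_k\, b_{k*}a_k^*$, and $b_{k*}$ has image supported on $\mathrm{pr}_X|Z|$, of codimension $\ge i-1$. Hence $Z_*H^1(C)_\bQ\subset N^{i-1}H^{2i-1}(X)$. Since $Z_*$ is a morphism of Hodge structures defined over $\bQ$, you get $V\oplus\overline V=\sum_Z Z_*H^1(C)_\bC\subset N^{i-1}H^{2i-1}(X)_\bC$. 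No Lefschetz $(1,1)$ argument is needed.

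Your second inclusion has the right structure, but with $\codim Z=i$ as written it is degree-inconsistent. The image $\iota_*H^1(\tilde Z)$ lands in $H^{2i+1}(X)$, and push-forwards of divisors on $\tilde Z$ are codimension $i+1$ cycles. Take $Z$ of codimension $i-1$ instead. Then $\iota_*\colon H^1(\tilde Z)\to H^{2i-1}(X)$ and $\iota_*\colon\pic^{(0)}(\tilde Z)\to J^i_\alg(X)$, so $\iota_*H^{0,1}(\tilde Z)\subset V$ and hence $\iota_*H^1(\tilde Z)_\bC\subset V\oplus\overline V$. The argument is completed by the mixed Hodge theoretic fact that $N^{i-1}H^{2i-1}(X)$ is spanned by such Gysin images. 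Components of higher codimension contribute nothing in degree $2i-1$, by the same vanishing.

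For comparison, the survey gives no proof of this lemma and simply cites Murre.
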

In particular   $H^{2i-1}_\alg(X)$  is a Hodge substructure of $H^{2i-1}(X)_\bQ$  of level at most 1, while, referring to \eqref{eqn:GrothHhdg},
$$  
H^{2i-1}_\hdg(X) := H^{2i-1,i}_\hdg(X),
$$  is  the largest Hodge substructure of $H^{2i-1} (X)_\bQ$ of level $\le 1$, that is, which is of type $(i-1,i )+(i,i-1)$,
By Observation~\ref{obs:JacIsAb}, it then follows that  the associated  torus 
 \begin{equation}
    \label{eqn:DualHdgJac}
    J ^i_\hdg(X) = H^{2i-1}_\hdg(X)/[F^i(H^{2i-1}_\hdg(X))+  H^{2i-1}(X) \cap  H^{2i-1}_\hdg(X) ]
\end{equation} 
 is an abelian variety. Moreover,   up to isogeny  $J^i_\alg(X)$ is a abelian subvariety  of $J^i_\hdg(X)$.  Hence 
 \begin{obsv}
 $  GHC (X,2i-1,i)\implies J^i_\hdg(X) \text{ is isogenous to }J^i_\alg(X) $.
\end{obsv}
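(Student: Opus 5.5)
The statement to prove is the Observation: $GHC(X,2i-1,i)$ implies that $J^i_\hdg(X)$ is isogenous to $J^i_\alg(X)$. The plan is to compare the two rational Hodge structures $H^{2i-1}_\alg(X)=N^iH^{2i-1}(X)$ and $H^{2i-1}_\hdg(X)=H^{2i-1,i}_\hdg(X)$, and then pass to the associated tori using the explicit quotient descriptions already in the paper.

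First, by definition \eqref{eqn:algodd} we have $H^{2i-1}_\alg(X)=N^iH^{2i-1}(X)$, and $H^{2i-1}_\hdg(X)$ is by definition $H^{2i-1,i}_\hdg(X)$. The conjecture $GHC(X,2i-1,i)$ (Conjecture~\ref{conj:GHC} with $m=2i-1$) states exactly that $H^{2i-1,i}_\hdg(X)=N^iH^{2i-1}(X)$. Hence, under the hypothesis, the two $\bQ$-sub-Hodge structures of $H^{2i-1}(X)_\bQ$ coincide as subspaces, and therefore also their Hodge filtrations coincide, since both are induced from $H^{2i-1}(X)_\bC$.

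Next, compare the tori. By Murre's Lemma, $J^i_\alg(X)$ is the quotient of the complex vector space $H^{2i-1}_\alg(X)_\bC/F^i$ by the image of the lattice $H^{2i-1}(X)\cap H^{2i-1}_\alg(X)$. By \eqref{eqn:DualHdgJac}, $J^i_\hdg(X)$ is the same kind of quotient built from $H^{2i-1}_\hdg(X)$. Since these subspaces and their $F^i$ pieces coincide, the universal covers agree and the lattices agree. So the two tori are in fact equal as subtori of $J^i(X)$, and in particular isogenous.

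The only delicate point concerns which lattice is meant. The lattice should be the intersection of the rational subspace with the image of integral cohomology modulo torsion. If one instead uses a lattice that is merely commensurable with it, for example a lattice coming from the Abel--Jacobi image, then the identity map on the common universal cover induces a homomorphism with finite kernel between tori of the same dimension. Such a map is an isogeny, which is the phenomenon recalled in the subsection on tori. In either reading the conclusion holds up to isogeny, and this lattice check is the one step that needs care.
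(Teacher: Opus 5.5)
Your proof is correct and follows the paper's argument: by Murre's lemma and the explicit quotient description of $J^i_\hdg(X)$, the torus $J^i_\alg(X)$ is, up to isogeny, the abelian subvariety of $J^i_\hdg(X)$ attached to the level $\le 1$ sub-Hodge structure $H^{2i-1}_\alg(X)\subset H^{2i-1}_\hdg(X)$, and $GHC(X,2i-1,i)$ forces these two Hodge structures, and hence the tori, to coincide. Your lattice caveat is harmless, and the conclusion is in fact stronger than stated: both tori are subtori of $J^i(X)$ cut out by the same rational subspace, so they are equal, not merely isogenous.
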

  \begin{rmq}   By Corollary~\ref{cor:OnCX}  below,  $GHC (X,2i-1,i)$   even   implies $GC(X,i)$ (as observed by Griffiths in \cite{griftrans}).
  \end{rmq}

 \begin{exmple} Since $J^1(X)=\pic^{(0)}(X)=J^1_\alg(X)=J^1_\hdg(X)$, 
 for divisors incidence equivalence coincides with Abel-Jacobi equivalence.
 \end{exmple}
 
\section{Bloch's biextension and incidence equivalence}
\label{sec:biexts}

 Let  $T$ be  an abelian variety and  $T^* =\pic ^{(0)} T$ its dual, considered 
 as  the group   under  the addition operation of  divisor classes  on $T$ of homological equivalent to $0$.
There is a  Poincar\'e line bundle $\cP$ on $T \times T^*$ where for every  $y\in T^*$ the restriction $\cP_y:=\cP|_{T\times y}$ 
 is the line bundle corresponding to the divisor class $y$. It can be made unique by demanding that $\cP|_{0\times T^*}$ is trivial.
On  $\bbP=\cP\setminus \{\text{zero section}\}$ the group $\bC^\times $ acts freely making 
$p: \bbP \to T\times T^*$ a $\bC^\times$-torsor over $T\times T ^*$, called the \textbf{\emph{Poincaré-biextension}}. 
Every point  $y\in T^*$ defines an exact sequence
\[
1 \to \bC^\times \to \bbP_y \to T\to 0,\quad \bbP_y= p^{-1}(T\times y),  
\]
that  is, 
\[
\bbP_y\in   \ext ^1 (T,\bC^\times),
\]
an extension  of $T$ by $\bC^\times$ in the category of commutative group varieties over $\bC$.  
In order to  pass  to dual intermediate jacobians, for the
remainder of this section I shall use the convention
$$
(i,j)\in \bN^2  \text{ is a pair of numbers with  }i+j=d+1.
$$
This is motivated by the property that  for  any complex projective manifold $X$ of dimension $d$ the dual of    $ J^i(X)$ is $J^{j}(X)$.
\par
If $\psi^{j}_\aj :\ch{j}  \hom X \to J^{j} (X)$ is the Abel-Jacobi map, and 
$\psi^{j}_\aj (\eta) =y\in J^{j}(X)$, then (almost by definition)
$\bbP_y$ depends only on the class of $\eta$ modulo  Abel-Jacobi equivalence and  thus one writes  $\bbP_\eta$. Hence
\[
\bbP_\eta \in \ext ^1 ( J^i(X),\bC^\times )\implies \psi^j_\aj (\eta) \text{ is a split  extension } \iff \eta\sim_\aj 0.
\]
Bloch~\cite{blochbiext} defined a replacement  for the Poincar\'e torsor for Chow groups homologically equivalent to zero, namely   
a certain $\bC^\times$-torsor, called the \textbf{\emph{Bloch-biextension}}
\[
p: \bE \to \ch i\hom X\times \ch  {j} \hom X .
\]
For $\eta\in \ch {j}  \alg X$,  set   $\bE^\alg_\eta= p^{-1}(\ch i \alg X\times \eta)$.
This defines an extension
$$
1 \to \bC^\times \to \bE^\alg_\eta \to \ch i \alg  X\to 0.
$$ 
The Abel-Jacobi maps $\psi^i$ and $\psi^j$ induce  $\psi : \ch  i \alg X\times \ch {j} \alg X \to J^i(X)\times J^{j}(X) $, and 
 Müller-Stach~\cite[Theorem 1]{mstach} proves  
 that  one has an equality of $\bC^\times$-biextensions
 $$
 \psi^* ( \bbP )=\bE.
 $$ 
 As a result  he shows -- using the algebraic nature of the Bloch-biextension --
  \footnote{Consider also the suggested corrections in \cite[Satz 3.2.1]{meyerBiExt}, \cite[Lemma 5.1, 5.2]{naka}).}
\begin{thm}[\protect{  \cite[Theorem 2]{mstach}}]
$\bE^\alg_\eta\in \ext(  \ch i   {\alg}  X, \bC^\times )  \text{ is a split  extension } \iff \eta\sim_\inc 0$.
\end{thm}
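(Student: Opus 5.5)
The plan is to use Müller-Stach's identity $\psi^*(\bbP)=\bE$ to move the question about $\bE^\alg_\eta$ to the Poincaré biextension over the abelian variety $J^i_\alg(X)$, and then to test the resulting extension against curves, where the incidence divisors live. Restricting $\psi^*(\bbP)=\bE$ to $\ch i\alg X\times\{\eta\}$ gives $\bE^\alg_\eta = (\psi^i_\aj)^*\bigl(\bbP_{\psi^j_\aj(\eta)}\big|_{J^i_\alg(X)}\bigr)$. The restriction $\bbP_{\psi^j_\aj(\eta)}|_{J^i_\alg(X)}$ is the extension of the abelian variety $J^i_\alg(X)$ by $\bC^\times$ attached to the image $\bar y$ of $y=\psi^j_\aj(\eta)$ under the dual surjection $J^j(X)=J^i(X)^*\to J^i_\alg(X)^*$.

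Because $\psi^i_\aj:\ch i\alg X\to J^i_\alg(X)$ is surjective, pulling back is injective on $\ext^1(-,\bC^\times)$. So $\bE^\alg_\eta$ splits if and only if $\bar y=0$ in $J^i_\alg(X)^*=\pic^{(0)}(J^i_\alg(X))$. This needs care, because $\ch i\alg X$ is only an abstract group; I would interpret splitting through the algebraic structure, i.e.\ compatibly along all families $Z\subset Y\times X$. This is where the algebraic nature of Bloch's biextension enters.

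Next I would relate $\bar y=0$ to incidence divisors. For a family $Z$ over a smooth projective curve (or variety) $Y$ with base point $y_o$, the map $\psi^i_Z:Y\to J^i_\alg(X)$ is holomorphic by Lemma~\ref{lem:OnHs}. The pullback $(\psi^i_Z)^*\bbP_{\bar y}$ is the line bundle on $Y$ of the divisor $({}^tZ)_*(\eta)$ (suitably normalised at $y_o$), where ${}^tZ\in\cors{1-j}{}{X,Y}$ is the correspondence defined by $Z$. This identification is the standard compatibility of the Abel--Jacobi map with correspondences, i.e.\ diagram~\eqref{eqn:CorOnJacs} combined with the duality $J^i(X)^*=J^j(X)$: the map $(\psi^i_Z)^*:J^i(X)^*\to\pic^{(0)}(Y)$ is the transpose of $Z_*$, which is $({}^tZ)_*$ on $J^j(X)$. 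Hence $\eta\sim_\inc0$ if and only if $(\psi^i_Z)^*\bar y=0$ for all families $Z$.

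It remains to prove the two implications. If $\bar y=0$, every pullback vanishes, so $\eta\sim_\inc0$. Conversely, $J^i_\alg(X)$ is generated by the images $\psi^i_Z(Y)$. Indeed, the span of the images of the maps $\psi^i_Z$ is an abelian subvariety, and it is everything because $\psi^i_\aj$ is onto. So one can choose finitely many families, and by taking products, a single family $Y\to J^i_\alg(X)$ whose induced map $\alb(Y)\to J^i_\alg(X)$ is surjective. Then $\pic^{(0)}(J^i_\alg(X))\to\pic^{(0)}(Y)$ has finite kernel. Vanishing of $(\psi^i_Z)^*\bar y$ therefore gives $\bar y$ torsion, i.e.\ splitting up to isogeny.

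I expect the main obstacle to be removing this torsion ambiguity and making the splitting notion precise for $\ch i\alg X$. The first thing I would try is a Poincaré-type family built as in Theorem~\ref{thm:OnPic}, with $Y$ mapping onto $J^i_\alg(X)$ with connected fibres, so that pullback on $\pic^{(0)}$ is injective. Failing that, I would state the equivalence modulo torsion, as the corrections cited in the footnote suggest.
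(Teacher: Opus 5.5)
Your Steps 1 and 3 are sound. Restricting $\psi^*(\bbP)=\bE$ gives $\bE^\alg_\eta=(\psi^i_\aj)^*\bbP_{\bar y}$. For a family $Z\subset Y\times X$, the pull-back $(\psi^i_Z)^*\bbP_{\bar y}$ is, up to sign and normalisation, the line bundle of the incidence divisor; this follows from \eqref{eqn:CorOnJacs}, the duality $J^i(X)^*=J^j(X)$ and Abel's theorem on $Y$.

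The genuine gap is Step 2, and it is exactly what leaves the implication $\eta\sim_\inc 0\Rightarrow$ split unproved. Pulling back along a surjection is not injective on $\ext^1(-,\bC^\times)$:
for abstract groups $\ext^1(-,\bC^\times)=0$ because $\bC^\times$ is divisible, so there "split" carries no information at all;
for abelian varieties, pulling back along a surjection $\phi\colon B\to A$ is the dual map $\hat\phi\colon\hat A\to\hat B$, whose kernel is the Cartier dual of $\pi_0(\ker\phi)$.
This is why your Step 4 only shows that $\bar y$ is torsion. That is a statement about some multiple $N\eta$, which is weaker than the theorem.

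Your proposed remedy does not repair this. A family with $\pic^{(0)}(J^i_\alg(X))\to\pic^{(0)}(Y)$ injective exists if and only if the images of $H_1(Y,\bZ)$ under the homomorphisms induced by families generate the full lattice of $J^i_\alg(X)$. Surjectivity of $\psi^i_\aj$ only gives a sublattice of finite index. The full-lattice property is an integral property of families of cycles that nothing in the paper or in your argument provides. The Poincaré correspondence of Theorem~\ref{thm:OnPic} does not help either: it is a family over $\pic^i(X)$ whose induced endomorphism is multiplication by an integer, so it reintroduces exactly such a factor.

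The missing idea is not to pass through the single point $\bar y\in J^i_\alg(X)^*$ at all. The paper, following Müller-Stach, derives the theorem from $\psi^*(\bbP)=\bE$ \emph{together with the algebraic nature of Bloch's biextension}. Since abstract splittings always exist, splitting of $\bE^\alg_\eta$ must be understood in that algebraic sense, i.e. tested on algebraic families. Restricted to a family $Z\subset Y\times X$, the torsor becomes the $\bC^\times$-torsor of $\mathcal O_Y(D_{\eta,Z})$. This torsor is trivial exactly when the incidence divisor is rationally equivalent to zero.

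Read this way, your Step 3 is the whole argument for both implications. Everything is decided on the parameter varieties $Y$, where Abel's theorem for divisors is exact and no isogeny intervenes. The comparison with $\bbP$ over the jacobians, where the torsion you found genuinely lives, is the content of the subsequent Corollary, and it is there that an integer $N$ enters.

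If you insist that a splitting be a single homomorphic section of $\bE^\alg_\eta\to\ch i \alg X$, you must additionally show that the family-wise trivialisations do not depend on the family representing a given class. That well-definedness, not the choice of a clever family, is where the integral subtlety you met would have to be confronted.
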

   
\begin{corr}[\protect{  \cite[Theorem 3]{mstach}}] Let   $X$ and $\eta\in \ch  {j } \alg X$ be as before. 
Observe that  $\psi_\aj(\eta) \in  J^j_\alg(X) $.     Assume that moreover 
$$
(*) \qquad \qquad\text{ for some integer } N     \text{ one has }  \psi_\aj(N\eta) \in  (J^i_\alg(X))^*  . \hfill
$$
 Then,  if $\bE^\alg_\eta$ is a  split extension, also $\bbP_{N\eta}$ is a split extension. 

Hence, if  $(*) $  holds, then   $\eta\sim_\inc 0\implies N\eta \sim_\aj 0$.
   \end{corr}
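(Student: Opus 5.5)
The plan is to compare the two extensions through the Abel--Jacobi map: restrict the Poincaré biextension to the algebraic part of the intermediate jacobian and use Müller-Stach's identity $\psi^*(\bbP)=\bE$.

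Write $A=J^i_\alg(X)\subset J^i(X)$. This is an abelian subvariety by the Corollary above. Let $\iota: A\into J^i(X)$ be the inclusion, and let $\iota^*:J^j(X)=J^i(X)^*\to A^*$ be the dual surjection.

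First, since $\psi^i_\aj:\ch i\alg X\to A$ is surjective, pulling back along it and using $\psi^*(\bbP)=\bE$ (\cite[Theorem 1]{mstach}) gives
$$\bE^\alg_\eta=(\psi^i_\aj)^*\big(\bbP_\eta|_{A}\big).$$
Here $\bbP_\eta|_A\in\ext^1(A,\bC^\times)$ is the Poincaré extension of $A$ attached to the point $\iota^*\psi^j_\aj(\eta)\in A^*$.

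Second, I will show that if $\bE^\alg_\eta$ splits then $\bbP_\eta|_A$ splits.

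1. A splitting of $\bE^\alg_\eta$ is a homomorphic section $s:\ch i\alg X\to\bE^\alg_\eta$.
2. On the kernel of $\psi^i_\aj$ the pulled-back torsor is canonically trivial, so $s$ induces a character $\chi:\ker\psi^i_\aj\to\bC^\times$.
3. Bloch's biextension is algebraic, i.e. the section may be taken compatible with algebraic families. Hence $\chi$ is constant along families, and $\chi$ is trivial on the divisible group of Abel--Jacobi trivial algebraically trivial cycles parametrised by families.
4. Therefore $s$ descends to a section of $\bbP_\eta|_A$ over $A$. Regularity of the section follows from Lemma~\ref{lem:OnHs}(1), because $A$ is dominated by images of families.

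This step is the main obstacle. It is exactly where the corrections in \cite{meyerBiExt,naka} intervene. I would try to avoid it by using that $\ext^1(A,\bC^\times)\cong A^*$ is representable: triviality of the extension class can be checked after pulling back along a surjection from a curve family (an abelian variety is generated by curves via Abel--Jacobi of families).

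Third, hypothesis $(*)$ identifies $\psi_\aj(N\eta)$ with a point $a$ of $A^*$, viewed inside $J^j(X)$ via the dual map. Under this identification $\bbP_{N\eta}=\bbP_\eta^{\otimes N}$ on $J^i(X)$ is determined by its restriction to $A$: that restriction equals $(\bbP_\eta|_A)^{N}$, which is split by the second step. The composite $A^*\to J^j(X)\to A^*$ is an isogeny, and the factor $N$ absorbs it. Hence $\iota^*\psi(N\eta)=0$ forces $\psi(N\eta)=0$ in $J^j(X)$, i.e. $\bbP_{N\eta}$ splits.

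The final assertion then follows from the stated Theorem \cite[Theorem 2]{mstach}: $\eta\sim_\inc0$ gives that $\bE^\alg_\eta$ splits, so $\bbP_{N\eta}$ splits, which is equivalent to $N\eta\sim_\aj0$.
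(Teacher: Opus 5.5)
The paper gives no separate proof. It presents the statement as a consequence of M\"uller-Stach's Theorem~1 ($\psi^*\bbP=\bE$) and Theorem~2, and the passage from $\ch{i}{\alg}{X}$ to $J^{i}_{\alg}(X)$ rests on ``the algebraic nature of the Bloch biextension''. Your pull-back-and-descend scheme is the same, but its second step, which you rightly flag as the main obstacle, is not established. With the notion of splitting you write down, it is actually false. If a splitting is just a homomorphic section, then every extension of $\ch{i}{\alg}{X}$ by $\bC^\times$ splits, because $\bC^\times$ is divisible and hence an injective $\bZ$-module. Your step would then give $\iota^*\psi^{j}_{\aj}(\eta)=0$ for every $\eta$. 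For a curve $X$ of positive genus ($d=1$, $i=j=1$, $A=J(X)$, $\iota=\id$) this says the Abel--Jacobi map kills all divisor classes of degree zero.

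So ``split'' must mean an algebraic splitting, i.e.\ compatible trivializations of the pulled-back torsors over the parameter varieties of all families of cycles. Item~3 does not show that such a splitting descends along $\psi^{i}_{\aj}$:
\begin{itemize}
\item divisibility never forces a $\bC^\times$-valued character to be trivial, and $\ker\psi^{i}_{\aj}\cap\ch{i}{\alg}{X}$ is not known to be divisible anyway;
\item constancy along connected families only kills $\chi$ on what such families sweep out. For a correspondence $\Gamma$ with $\psi^{i}_{\aj}\circ\Gamma_*:J(C)\to A$ surjective with kernel $K$, it kills $\chi$ on $\Gamma_*(K^0)$ but not on $\Gamma_*(K)$.
\end{itemize}
Your fallback has the same defect. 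Pulling back along a surjection $f:B\to A$ of abelian varieties acts on $\ext^1(-,\bC^\times)$ as the dual map $A^*\to B^*$, whose kernel $\Hom(K/K^0,\bC^\times)$ is in general nonzero. What these arguments honestly give is only that $\iota^*\psi^{j}_{\aj}(\eta)$ is a \emph{torsion} point of $A^*$. The cleanest way to see this: Theorem~1 identifies the torsor on the parameter variety $S$ of a family $Z$ with the line bundle of the incidence class ${}^tZ_*\eta\in\pic^0(S)$, and ${}^tZ_*:J^{j}(X)\to\pic^0(S)$ is dual to $Z_*:\alb(S)\to J^{i}(X)$.

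This torsion, together with the kernel of the isogeny $A^*\to J^{j}(X)\to A^*$ that you introduce yourself, breaks your third step. The claim that ``the factor $N$ absorbs it'' is unfounded: $N$ is fixed by $(*)$ and bears no relation to the degree of that isogeny or to the order of the torsion from the second step. You know that $\psi^{j}_{\aj}(N\eta)$ lies in a subtorus on which $\iota^*$ has finite kernel and that $\iota^*\psi^{j}_{\aj}(N\eta)$ is torsion. From this you can only conclude $\psi^{j}_{\aj}(MN\eta)=0$ for some further integer $M\ge 1$, i.e.\ $MN\eta\sim_{\aj}0$. That is enough for the application to $GC(X,i)$, which is a statement after tensoring with $\bQ$, but it is not the corollary as stated. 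Keeping the same $N$ requires additional input. For instance, you would need a family whose Abel--Jacobi map $\alb(S)\to A$ has connected kernel (so that the dual map $A^*\to\pic^0(S)$ is injective), together with a reading of $(*)$ in which the composite $A^*\to J^{j}(X)\to A^*$ is the identity.
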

   
   This again poses the problem: how to show that an  element 
 in   $   J^{j}_\alg(X)$ belongs to the dual of $J^i_\alg (X)$?
   This can be decided on the level of tangent spaces:
    if $H^{2i-1}_\alg(X)_\bC= T^i\oplus \bar T^i$ where $T^i$ is the tangent space of $J^i_\aj (X)$.
   Then $H^{2i-1}_\alg(X)$  is a polarized Hodge structure of type $(i -1,i)+(i,i-1)$ and almost by definition  of the dual torus, this yields  
    the following  criterion. 
 
  \begin{crit} \label{crit:InIsAj}
        $J^{j}_\alg (X)$ belongs to the dual of $J^i_\alg(X)$ if
       the cupproduct pairing $H^{2i-1}_\alg(X)  \otimes H^{2j-1 }_\alg(X) \to \bQ$ is non-degenerate on the first factor.
    Hence if this is the case, $GC(X,i)$ holds.
   \end{crit}
 
 \begin{rmk} Conjecture $D(X)$ is equivalent to the assertion that  the cup pairing pairing  
   $H^{2\bullet}_\alg(X) \times H^{2d-2\bullet}_\alg(X)$ is non-degenerate (see \cite[Proposition 3.6]{kleiman})
   and so Criterion~\ref{crit:InIsAj} is a partial  odd-rank version of conjecture  $D(X)$.
\end{rmk}

The dual of   $J^i_\hdg (X)$ as defined in \eqref{eqn:DualHdgJac}
is $J^j _\hdg(X)$. This is because the polarization on the maximal sub Hodge structures  
$H^{2\bullet-1}_\hdg(X)  \subset H^{2\bullet-1}(X)_\bQ $
of level $\le 1$ gives a duality pairing
$H^{2i-1}_\hdg(X) \times H^{2d-2i+1}_\hdg(X) \to \bQ$.
Hence, one has:

   \begin{corr} \label{cor:OnCX} Conjecture $ GHC (X,2i-1,i)$ implies  $GC(X,i)$.
   \end{corr}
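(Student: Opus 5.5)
The plan is to feed the generalized Hodge conjecture into Criterion~\ref{crit:InIsAj}. That criterion reduces $GC(X,i)$ to one statement: the cup product pairing
$$H^{2i-1}_\alg(X)\otimes H^{2j-1}_\alg(X)\to\bQ,\qquad i+j=d+1,$$
is non-degenerate on the first factor.

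Assume $GHC(X,2i-1,i)$. By definition \eqref{eqn:algodd}, this says $H^{2i-1}_\alg(X)=N^iH^{2i-1}(X)$ equals $H^{2i-1}_\hdg(X)$, the largest Hodge substructure of $H^{2i-1}(X)_\bQ$ of level $\le1$.

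The key step is to show that $H^{2j-1}_\alg(X)$ contains enough classes to pair with it. Set $V=H^{2i-1}_\hdg(X)$ and note $2j-1=2d-2i+1$. I would consider the image of $V$ under the Lefschetz isomorphism $L^{d-2i+1}$, or, when $2i-1>d$, under $\Lambda^{2i-1-d}$. This image is again a level $\le 1$ Hodge substructure, of type $(j-1,j)+(j,j-1)$ in $H^{2j-1}$.

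The remark preceding the statement says that the polarization gives a duality pairing $H^{2i-1}_\hdg(X)\times H^{2j-1}_\hdg(X)\to\bQ$. By the Hodge--Riemann relations (Theorem~\ref{thm:HR}), applied componentwise to the Lefschetz decomposition, this pairing is non-degenerate. Concretely, $H^{2i-1}_\hdg$ is stable under taking primitive components, and on each component the form $Q(Cx,\bar x)$ is positive.

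What remains is to show that $H^{2j-1}_\hdg(X)\subset H^{2j-1}_\alg(X)$, up to the inclusion that always holds in the other direction. For this I would use that the coniveau filtration is stable under $L$. Since $L^{d-2i+1}$ maps $N^i H^{2i-1}$ into $N^{j}H^{2j-1}$, the space $H^{2j-1}_\alg(X)$ contains $L^{d-2i+1}V$ when $2i-1\le d$. The Hard Lefschetz pairing $x\cdot L^{d-2i+1}y$ is exactly the Hodge--Riemann form $Q$, up to sign, after splitting into primitive pieces. That form is non-degenerate on any Hodge substructure of $H^{2i-1}_\bQ$ that is stable under the primitive projections. This gives non-degeneracy on the first factor, so Criterion~\ref{crit:InIsAj} yields $GC(X,i)$.

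The main obstacle is this last stability requirement: $V$ must be stable under the primitive projections $\Phi_{2i-1,a}(L,\Lambda)$ of \eqref{eqn:OnLambda}. For $V=H^{2i-1}_\hdg$ I would argue it directly from maximality. The projections are morphisms of Hodge structures, so they map $V$ to level $\le1$ substructures. Each such image, pushed back by powers of $L$, lies again in $H^{2i-1}$ and inside $V$ by maximality. Hence $V=\bigoplus_a L^a V_a$ with $V_a$ primitive of level $\le1$, and $Q$ restricted to each $V_a$ is non-degenerate by Q(2).

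In the case $2i-1>d$, I would first check the dual setting: apply $GHC$ to identify the analogous space and use the symmetric roles of $i$ and $j$. If this does not go through, I would fall back on Corollary~\ref{cor:OmDualPics} to transfer $GC$ between $i$ and $j$.
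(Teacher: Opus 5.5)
\paragraph{The case $2i-1\le d$.}
Here your argument is correct. It follows the paper's route, Criterion~\ref{crit:InIsAj} plus the polarization duality between level $\le 1$ parts, and it fills in a step the paper leaves implicit. The paper only notes that $H^{2i-1}_\hdg(X)\times H^{2j-1}_\hdg(X)\to\bQ$ is perfect, whereas the criterion needs the second factor to be $H^{2j-1}_\alg(X)$.

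You supply this. $L^{d-2i+1}\colon H^{2i-1}(X)_\bQ\to H^{2j-1}(X)_\bQ$ is an isomorphism of Hodge structures (up to twist) that is compatible with coniveau. Hence $H^{2j-1}_\hdg(X)=L^{d-2i+1}H^{2i-1}_\alg(X)\subset H^{2j-1}_\alg(X)$. Your maximality argument shows that $H^{2i-1}_\hdg(X)$ is stable under the projections $L^a\Phi_{2i-1,a}(L,\Lambda)$, and this is exactly what makes the restricted Hodge--Riemann form non-degenerate.

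\paragraph{The case $2i-1>d$.}
This case is a genuine gap, and none of your proposed fixes works.
\begin{itemize}
\item $\Lambda^{2i-1-d}$ is not known to send coniveau classes to coniveau classes. That is what $B(X)$ would give (compare Corollary~\ref{cor:OnConjB}), so $\Lambda^{2i-1-d}V\subset H^{2j-1}_\alg(X)$ is unavailable.
\item There is no symmetry between $i$ and $j$ to exploit, since $GHC$ is assumed only in degree $2i-1$.
\item Corollary~\ref{cor:OmDualPics} presupposes $GC(X,j)$ and the duality of $J^i_\alg(X)$ with $J^j_\alg(X)$, and you have neither.
\end{itemize}

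The obstruction is intrinsic to this method. Put $k=2i-1-d$. Then $H^{2i-1}_\alg(X)=H^{2i-1}_\hdg(X)=L^kH^{2j-1}_\hdg(X)$. By your own Hodge--Riemann argument, the form $(w',w)\mapsto L^kw'\cdot w$ is non-degenerate on $H^{2j-1}_\hdg(X)$. So the left kernel of $H^{2i-1}_\alg(X)\times H^{2j-1}_\alg(X)\to\bQ$ is $L^k$ applied to the orthogonal of $H^{2j-1}_\alg(X)$ inside $H^{2j-1}_\hdg(X)$.

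This left kernel vanishes if and only if $GHC(X,2j-1,j)$ holds. That is not known to follow from $GHC(X,2i-1,i)$; only the reverse implication follows, from your $L$-argument with $i$ and $j$ interchanged.

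The converse of Criterion~\ref{crit:InIsAj} also holds. To see this, dualize the maps $f_*\colon J^i_\alg(X)\to\pic^0(Y)$ and use that the images of the transposed correspondences generate $J^j_\alg(X)$. So in this range the statement is equivalent to $GHC(X,2i-1,i)\Rightarrow GHC(X,2j-1,j)$.

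The paper's one-line proof passes over the same point. What both your argument and the paper's actually establish is that $GHC$ in the lower of the two degrees $2i-1$, $2j-1$ implies both $GC(X,i)$ and $GC(X,j)$.
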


\begin{rmk}   
 There is a proof of Corollary~\ref{cor:OnCX} in 
 \cite[p. 178--181]{grifbombay}. The proof given there is  based on explicit calculations with forms and integrals,  while the above proof is direct.  
\end{rmk}

 The proof of $GC(X,2)$ as given in \cite{murre} first of all uses   that   $H^{3}_\alg (X)= H^3_{\prim,\alg}(X)+ L H^1_{\prim,\alg}(X)$ 
 which is the case  since $   H^1_{\prim,\alg}=H^1(X)_\bQ$   by Abel's theorem. This generalizes  to the following condition which is not known to be true in general.
  \begin{cond*}[$\Lambda,X,i$]  Let $v\in H^{2i-1 }_\alg (X)$ and let   $v= \sum_{a=s}^ {s+t}   L^av_{2(i-a) -1 }$ be its Lefschetz decomposition.
Then all $ v_{2(i -a)-1} \in H^{2(i-a)- 1  }_\alg (X)$.
\end{cond*}

\begin{rmk} Corollary~\ref{cor:OnConjB} implies that the above condition is implied by conjecture $B(X)$.
\end{rmk} 

The geometric proof of $GC(X,2)$ given in \cite[\S 6]{murre}  further   uses  that 
the  condition given in Criterion~\ref{crit:InIsAj} holds.
In this case it states that the intersection pairing $H^3_\alg(X) \times H^3_\alg(X)$ is non-degenerate  
 and is stated   in loc. cit. as Lemma~5.2. However it seems that  its  proof   is not complete:    the term $L \xi_1^{0,1}\cup \bar \xi_3^{1,2}\cup (u^{d-3})$ occurring in the computation  does not evidently vanish as claimed since it   is of type $(d,d)$.
 \par
 Assuming $(\Lambda,X,i)$  I propose a simpler approach (for any $(d,i)$) using the $\ast$-operator which is a linear operator on the total 
 real  de Rham cohomology. Almost by  definition  it  has  has the crucial property that
 for any class $v\in H^k(X)_\bR$ the class $v\wedge \ast v$ is of the form $f dV$, where $V$ is  a volume form and $f: X\to \bR_{\ge 0}$ a 
 smooth function. Hence its integral (which is the self-intersection of the class $v$) is non-negative. In fact, 
\[
 v\cdot v= \int _X v\wedge \ast v\ge 0, \text{ and equality holds } \iff v=0.
 \]
In the present setting  $v$ has a   primitive decomposition $v= \sum_{r\ge 0}  L^r v_{k-2r}\in H^k(X)_\bR$.
The formula  for the behaviour of the $\ast$-operator under primitive decomposition is     given   on p. 76 on \cite{weilboek}. It uses
 the Weil operator $C$ and the Lefschetz operator $L$. 
In somewhat modified  form it reads as follows.\footnote{Weil's results  are far from trivial and use a detailed calculative  approach to Lefschetz' theory instead of  the standard one  using  representation theory of $\slg{\bC}{2}$.} 
\begin{equation*}
\ast (L^r v_{k-2r}) = (-1)^{\half k(k+1)-r}   \cdot a_{d,r} L^{d-k+r } C(v_{k-2r}),\quad  a_{d,r} \in \bQ_{>0},
\end{equation*} 
as long as  $ d-k+r\ge 0$ (otherwise one gets $0$). 
% This equation shows that the $\ast$-operator is algebraic and so, if $v\in H^{k}_\alg(X) $, then   $\ast v\in H^{2d-k}_\alg(X) $.
 Applying this  with  $k=2i-1$, assuming $(\Lambda,X,i)$, by linearity of the $\ast$-operator
 and the fact that $LH^q_\alg(X)\subset H^{q+2}_\alg(X)$,  this implies 
 \begin{equation}\label{eqn:weil}
 \text{if } v\in H^{2i-1}_\alg(X) \text{ then }   \ast v   \in H^{2d -2i+1}_\alg(X)  \text{ and }     v \cdot (\ast v)> 0  \text{ unless  } v=0.
 \end{equation} 
 This then is the major step to prove the following result.

\begin{corr} \label{cor:GenMurre} Condition~$ (\Lambda,X,i)$  implies $GC(X,i) $.  
\end{corr}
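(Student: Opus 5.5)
The plan is to reduce the statement to Criterion~\ref{crit:InIsAj}. Here $j=d+1-i$, so $2j-1=2d-2i+1$, and it suffices to show the following: the cup product pairing
$$H^{2i-1}_\alg(X)\otimes H^{2d-2i+1}_\alg(X)\to\bQ$$
is non-degenerate on the first factor. Concretely, for every nonzero $v\in H^{2i-1}_\alg(X)$ there should exist $w\in H^{2d-2i+1}_\alg(X)$ with $v\cdot w\neq 0$. Once this holds, Criterion~\ref{crit:InIsAj} gives that $J^j_\alg(X)$ lies (up to isogeny) in the dual of $J^i_\alg(X)$. The Corollary after Müller-Stach's theorem then gives $\eta\sim_\inc 0\Rightarrow N\eta\sim_\aj 0$ for $\eta\in\ch j\alg X$, which is $GC(X,j)$. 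To obtain $GC(X,i)$ itself, I would either run the same argument with the roles of $i$ and $j$ exchanged, or invoke Corollary~\ref{cor:OmDualPics} via Theorem~\ref{thm:OnPic}.

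The witness $w$ comes from the Hodge star, via \eqref{eqn:weil}. Take $v\in H^{2i-1}_\alg(X)$ and write its Lefschetz decomposition $v=\sum_a L^a v_{2(i-a)-1}$. Condition $(\Lambda,X,i)$ says every primitive component $v_{2(i-a)-1}$ lies in $H^{2(i-a)-1}_\alg(X)=N^{i-a}H^{2(i-a)-1}(X)$. This is a sub-Hodge structure of level $\le 1$, so it is stable under the Weil operator $C$: on types $(p,q)$ with $|p-q|=1$, $C$ acts by $\pm\ii$, and $C$ preserves the rational subspace because it commutes with complex conjugation on a real sub-Hodge structure. Weil's formula
$$\ast(L^a v_{k-2a})=\pm a_{d,a}L^{d-k+a}C(v_{k-2a}),\qquad k=2i-1,$$
then shows that each summand of $\ast v$ is $L^{d-k+a}$ applied to an element of $N^{i-a}H^{2(i-a)-1}$. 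Since $L$ raises coniveau by one, this lands in $N^{d-i+1}H^{2d-2i+1}=H^{2d-2i+1}_\alg(X)$. Hence $w:=\ast v$ is admissible.

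The remaining step is positivity: $v\cdot\ast v=\int_X v\wedge\ast v=\|v\|^2>0$ for $v\neq0$, using the harmonic representative. Non-degeneracy on the first factor follows, and the theorem reduces to Criterion~\ref{crit:InIsAj}.

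I expect the main obstacle to be the stability of $H^{2(i-a)-1}_\alg(X)$ under $C$ together with rationality. The classes $\ast v$ live a priori in real cohomology, while the coniveau spaces are $\bQ$-subspaces, so I would work with $H_\alg\otimes\bR$ throughout. The pairing's non-degeneracy over $\bR$ implies non-degeneracy over $\bQ$, since the rank of a rational pairing matrix does not change under field extension. A second point needing care is the sign and normalization in Weil's formula; only the positivity $v\wedge\ast v\ge0$ is actually used, so the constants $a_{d,a}$ are irrelevant.
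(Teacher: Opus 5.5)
Your analytic core is the paper's proof. You reduce via Criterion~\ref{crit:InIsAj} to showing that the left kernel of $H^{2i-1}_{\alg}(X)\otimes H^{2d-2i+1}_{\alg}(X)\to\bQ$ is trivial. You then get this from Weil's formula, $(\Lambda,X,i)$ and $v\cdot\ast v=\|v\|^2>0$, i.e.\ from \eqref{eqn:weil}. One correction there: the Weil operator $C$ does not preserve the rational space $N^{i-a}H^{2(i-a)-1}(X)$ in general. On $H^1$ of an elliptic curve without complex multiplication it is not defined over $\bQ$, so $\ast v$ only lies in $H^{2d-2i+1}_{\alg}(X)\otimes\bR$. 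Your fallback to real coefficients plus the rank argument is exactly what is needed, and it is the right way to read \eqref{eqn:weil}.

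The genuine gap is the passage from this non-degeneracy to $GC(X,i)$. What you proved is that $H^{2i-1}_{\alg}(X)\to H^{2j-1}_{\alg}(X)^\vee$, $v\mapsto v\cdot(-)$, is injective. This is the rational lattice map of the composite $J^i_{\alg}(X)\subset J^i(X)=J^j(X)^*\to J^j_{\alg}(X)^*$, so it says that this homomorphism has finite kernel. It does not give $J^j_{\alg}(X)\hookrightarrow J^i_{\alg}(X)^*$ up to isogeny, despite the literal wording of the first clause of Criterion~\ref{crit:InIsAj}. Such an embedding would force $\dim H^{2j-1}_{\alg}\le\dim H^{2i-1}_{\alg}$, whereas your injective $\ast$ gives the opposite inequality, and equality is not known to follow from $(\Lambda,X,i)$.

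So your intermediate conclusion $GC(X,j)$ is unfounded: for $\eta\in\ch{j}{\alg}{X}$ one needs trivial kernel on the $H^{2j-1}_{\alg}$ side. Neither bridge to $GC(X,i)$ works either. Rerunning your chain with $i$ and $j$ exchanged requires the $\ast$-argument for $w\in H^{2j-1}_{\alg}(X)$, i.e.\ $(\Lambda,X,j)$. Corollary~\ref{cor:OmDualPics} requires $J^i_{\alg}(X)$ and $J^j_{\alg}(X)$ to be dual, which the paper's closing Remark says is not available from $(\Lambda,X,i)$ alone.

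The repair is to use the direction you actually have. Apply Müller-Stach's theorem and its corollary with the roles of $i$ and $j$ interchanged, i.e.\ to $\eta\in\ch{i}{\alg}{X}$ and the extension $\bE^{\alg}_{\eta}$ of $\ch{j}{\alg}{X}$ by $\bC^\times$. Since $\bE=\psi^*\bbP$, the condition $\eta\sim_{\inc}0$ forces $\psi_{\aj}(\eta)$ (up to a fixed multiple) into the finite kernel of $J^i_{\alg}(X)\to J^j_{\alg}(X)^*$, hence $N\eta\sim_{\aj}0$. That is $GC(X,i)$ directly, which is what the last sentence of Criterion~\ref{crit:InIsAj} and the paper's two-line proof assert. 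Drop the detour through $GC(X,j)$.
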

\begin{proof}
 It suffices to show   that $(\Lambda,X,i)$ indeed implies that 
the cupproduct pairing 
$$
H^{2i-1 }_\alg(X) \otimes H^{2d-2i+1}_\alg(X)  \to \bQ
$$
 is non-degenerate on the first factor. This follows from the assertion  \eqref{eqn:weil}.
\end{proof}

In fact, in \cite[p. 87--88]{grifschmid} the same idea  is  used  in a simpler situation  where criterion~\ref{crit:InIsAj}    applies for obvious reasons:
\begin{corr} Suppose $X$ is a smooth complex projective variety of odd dimension $d=2m+1$ such that $b_{2k+1}(X)=0$ for $k\not=m$.
Then incidence equivalence and Abel-Jacobi equivalence coincide for codimension $m+1$ cycles     algebraically equivalent to 0.
Examples include smooth complete intersections of dimension $2m+1$  in projective space.
\end{corr}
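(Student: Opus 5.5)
The plan is to apply Criterion~\ref{crit:InIsAj} with $i=m+1$. Since $d=2m+1$, the complementary index is $j=d+1-i=m+1=i$. The criterion then asks that the cup product pairing
$$
H^{2m+1}_\alg(X)\otimes H^{2m+1}_\alg(X)\to \bQ
$$
be non-degenerate on the first factor. Here $2m+1=d$ is the middle degree, so Condition~$(\Lambda,X,m+1)$ should hold automatically. That reduces everything to the positivity argument \eqref{eqn:weil}, or more directly to the Hodge--Riemann relations of Theorem~\ref{thm:HR}.

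First I check that all of $H^{d}(X)_\bQ$ is primitive. By the Lefschetz decomposition \eqref{eqn:lefschdec}, $H^d(X)_\bQ=\bigoplus_{\ell\ge0}L^\ell H^{d-2\ell}_\prim(X)$. For $\ell\ge1$ the summand lies in the image of $H^{d-2\ell}(X)_\bQ=H^{2(m-\ell)+1}(X)_\bQ$, and this group vanishes because $b_{2k+1}(X)=0$ for $k=m-\ell\ne m$. So $H^d(X)_\bQ=H^d_\prim(X)$, and every $v\in H^{2m+1}_\alg(X)$ has trivial Lefschetz decomposition, $v=v_d$. In particular Condition~$(\Lambda,X,m+1)$ holds trivially, and Corollary~\ref{cor:GenMurre} already gives $GC(X,m+1)$.

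For a self-contained check, I use that $H^{2m+1}_\alg(X)$ is a Hodge substructure of type $(m,m+1)+(m+1,m)$ and is primitive. Hence the Weil operator $C$ preserves it, acting as $\pm\ii$ on the two pieces, and $C$ is real, so $Cv\in H^{2m+1}_\alg(X)_\bR$ for real $v$. By Theorem~\ref{thm:HR}, Q(2) gives $Q(Cv,v)>0$ for real $v\neq0$. Since $Q$ is a nonzero multiple of the cup product (here $L^{d-r}=L^0$), $v\cdot Cv\neq0$. So no nonzero $v\in H^{2m+1}_\alg(X)_\bR$ is orthogonal to all of $H^{2m+1}_\alg(X)_\bR$. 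Non-degeneracy over $\bR$ of a $\bQ$-bilinear form implies non-degeneracy over $\bQ$, so the criterion applies.

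For the example, let $X\subset\bP^{2m+1+r}$ be a smooth complete intersection of dimension $2m+1$. The weak Lefschetz theorem, applied repeatedly via hypersurface sections, gives $H^k(X)\cong H^k(\bP)$ for $k<d$, and these groups vanish for odd $k$. Poincaré duality then kills the odd Betti numbers above the middle, so $b_{2k+1}(X)=0$ for $k\neq m$.

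The main point needing care is that $C$ preserves $H^{2m+1}_\alg(X)_\bR$. This is exactly where level $\le1$ (Murre's lemma) is used. Without it, $Cv$ need not be algebraic and the pairing argument breaks down.
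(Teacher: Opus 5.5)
Your proposal is correct and is essentially the paper's argument. With $i=j=m+1$, the Betti-number hypothesis makes all of $H^{2m+1}(X)_\bQ$ primitive, so Condition $(\Lambda,X,m+1)$ holds trivially and Criterion~\ref{crit:InIsAj} applies through the positivity argument behind Corollary~\ref{cor:GenMurre}; this is what the paper means by ``applies for obvious reasons'' (following Griffiths--Schmid), and your use of $Q(Cv,v)>0$ on the $C$-stable space $H^{2m+1}_\alg(X)_\bR$ is the primitive middle-degree case of the $\ast$-operator statement \eqref{eqn:weil}.
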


\begin{rmk} As to the dual of $J^i_\alg(X)$, Condition~$(\Lambda,X,i)$ is only known to give the inclusion $J^{j}_\alg (X)\subset J^i_\alg(X)^*$.
The reverse inclusion follows from Condition~$(\Lambda,X,j)$. So in particular $J^{j}_\alg (X)=J^i_\alg(X)^*$  if 
Condition~$(\Lambda,X,i)$ holds in case  $d=2i-1$, since then $j=i$. 
\end{rmk}

%This is a variant of the Lieberman condition for the conjecture $D(X)$ to hold. This is probably also falling under
%the validity of this conjecture.
    
Let me finish by giving a summary of the results in this section.

\begin{summar}\label{summar:main}
Let $X$ be a $d$-dimensional smooth complex projective variety. Then
 \begin{enumerate}
 \item In the notation introduced in \S~\ref{sec:AJ},   $GC(X,i) \iff 
 J^i_\alg(X)$ is isogenous to $\pic^i (X)$, the generalized Picard variety introduced in \S~\ref{sec:AJ}.
   Both would follow if  Grothendieck's Hodge conjecture  $GHC (X,2i-1,i) $ holds,
 
 \item  $B(X)\implies \Lambda(X,i)  \implies   GC(X,i) $,
 In particular one has $GC(X,i)$ for abelian varieties $X$,
 \item $\Lambda(X,2)$ holds and hence $G(X,2)$ holds,
 \item For $i+j=d+1$ one has $\Lambda(X,i)+ \Lambda(X,j) \iff GC(X,i) +GC(X,j) $. In particular  $\Lambda(X,i)  \iff GC(X,i) $ if   $d+1=2i$,
  \item $GC(X,1)$ and $G(X,d)$  hold (cf. Example~\ref{exm:basic}),
  
  \item $GC(X,m)$ holds if $\dim X=2m+1$ and $b_{2k+1}(X)=0$ for $k\not=m$ such as for complete intersections 
  of dimension $2m+1$ in projective space.
\end{enumerate}

\end{summar}
 
\bibliographystyle{siam}

	%\bibliography{/Users/chrismacbook/SurfDriveLU/Werk/Bib.f/QFbib.bib}

\end{document}